\def\SS{{{\mathbb S}}}
\def\NN{{{\mathbb N}}}
\def\RR{{\mathbb R}}
\tikzset{
	subset/.style={
		draw=none,
		edge node={node [sloped, allow upside down, auto=false]{$\subset$}}},
	Subset/.style={
		draw=none,
		every to/.append style={
			edge node={node [sloped, allow upside down, auto=false]{$\subset$}}}
	}
}
\tikzset{
	labl/.style={anchor=south, rotate=90, inner sep=.50mm}
}
\newcommand\Tstrut{\rule{0pt}{2.9ex}}
\newcommand\Bstrut{\rule[-1.2ex]{0pt}{0pt}}
\newcommand\TBstrut{\Tstrut\Bstrut}
\newcommand{\erre}{\mathds{R}}
\newcommand{\ci}{\mathds{C}}
\newcommand{\del}[1]{\delta_{#1}}
\newcommand{\riem}{\operatorname{Riem}}
\newcommand{\ricc}{\operatorname{Ric}}
\newcommand{\weyl}{\operatorname{W}}
\newcommand{\diver}{\operatorname{div}}
\newcommand{\rp}{\erre\mathbb{P}}
\newcommand{\cp}{\ci\mathbb{P}}
\newcommand{\bigslant}[2]{{\raisebox{.0em}{$#1$}\left/\raisebox{-.0em}{$#2$}\right.}}
\newcommand{\KN}{\mathbin{\bigcirc\mspace{-15mu}\wedge\mspace{3mu}}}
\newcommand{\ra}{\rightarrow}
\newcommand{\longra}{\longrightarrow}
\newcommand{\Longra}{\Longrightarrow}
\newcommand{\pa}[1]{{\left(#1\right)}}                  
\newcommand{\sq}[1]{{\left[#1\right]}}                  
\newcommand{\abs}[1]{{\left|#1\right|}}                 
\newcommand{\eps}{\varepsilon}                           
\newcommand{\ol}[1]{\overline{#1}}
\renewcommand{\hat}[1]{\widehat{#1}}
\renewcommand{\tilde}[1]{\widetilde{#1}}
\newtheorem{theorem}{\textbf{Theorem}}[section]
\newtheorem{cor}[theorem]{\textbf{Corollary}}
\theoremstyle{remark}
\newtheorem{rem}[theorem]{\textbf{Remark}}
\numberwithin{equation}{section}
\title[]
{Rigidity of Einstein manifolds with positive Yamabe invariant}
\date{\today}
\keywords{Einstein manifolds, Yamabe invariant, rigidity results}
\subjclass[2010]{53C21, 53C24, 53C25}
\begin{document}
	\maketitle
	
	\date{\today}
	
	\begin{center}
		\textsc{\textmd{L. Branca\footnote{Università degli Studi di Milano, Italy. Email: letizia.branca@unimi.it},
				G. Catino\footnote{Politecnico di Milano, Italy.
					Email: giovanni.catino@polimi.it.}, D. Dameno \footnote{Universit\`{a} degli Studi di Milano, Italy.
					Email: davide.dameno@unimi.it.}, P.
				Mastrolia\footnote{Universit\`{a} degli Studi di Milano, Italy.
					Email: paolo.mastrolia@unimi.it.}. }}
	\end{center}

	\begin{abstract}
		We provide optimal pinching results on closed Einstein manifolds with
		positive Yamabe invariant in any dimension, extending
		the optimal bound for the scalar curvature due to Gursky
		and LeBrun in dimension four. We also improve the known bounds
		of the Yamabe invariant \emph{via} the $L^{\frac{n}{2}}$-norm
		of the Weyl tensor for low-dimensional Einstein manifolds.
		Finally, we discuss some advances on an algebraic
		inequality involving the Weyl tensor for dimensions $5$ and $6$.
	\end{abstract}
	\section{Introduction and main results}
	The study of Riemannian functionals has proven to be widely important
	in the context of Riemannian Geometry and Geometric Analysis: indeed,
	many of the so-called \emph{special} (or \emph{canonical})
	Riemannian metrics arise as critical points of certain functionals,
	i.e. metrics which are solutions of the associated
	Euler-Lagrange equations. Given a closed smooth manifold $M$
	of dimension $n$, a classical
	example of such special cases is provided by Einstein
	metrics, which can be characterized as critical points of
	the celebrated \emph{Einstein-Hilbert functional}
	\begin{equation} \label{einhilb}
	\mathfrak{S}(g)=\mathrm{Vol}_g(M)^{-\frac{n-2}{n}}\int_M S_g\,d\mu_g,
	\end{equation}
	where $S_g$ and $\mathrm{Vol}_g(M)$ are, respectively,
	the scalar curvature and the
	volume with respect to the metric $g$. Other famous
	examples can be found if we consider the case $n=4$:
	for instance, the critical points of the
	\emph{Weyl functional}
	\[
	\mathfrak{W}(g)=\int_M\abs{\weyl_g}_g^{2}d\mu_g
	\]
	in dimension four are exactly
	the so-called \emph{Bach-flat metrics}, which
	have been intensively studied for many decades, due to their
	connection with General Relativity (\cite{bach}).
	The definition of the Weyl functional can be extended to higher
	dimensional cases, defining
	\begin{equation} \label{weylfun}
		\mathfrak{W}(g)=\int_M\abs{\weyl_g}_g^{\frac{n}{2}}d\mu_g,
	\end{equation}
	although Bach-flat metrics are no longer critical points if $n\neq 4$.
	It is worth to note that, for every $n$, \eqref{weylfun} is
	conformally invariant, i.e. it does not change under conformal
	changes of metric (see Section \ref{prelim} below): therefore,
	since, for any $n\geq 4$, Einstein metrics are also Bach-flat, this
	implies that a \emph{conformally Einstein metric}, i.e. a Riemannian
	metric whose conformal class contains an Einstein metric, is a critical
	point of \eqref{weylfun} as well, if $n=4$.
	
	Even though the existence of Einstein metrics requires, in general,
	strict topological conditions on $M$, it is always possible to
	find "non-obstructed" metrics by using \eqref{einhilb}: indeed,
	given a Riemannian metric $g$ on $M$ and its
	conformal class $[g]$, one can consider the so-called
	\emph{Yamabe invariant} $Y(M,[g])$, which is defined as the infimum
	of \eqref{einhilb} over the metrics $\tilde{g}\in[g]$.
	It is well-known that this infimum is always attained for every
	conformal class $[g]$ on $M$ and the metrics which actually
	achieve the minimum are \emph{constant scalar curvature metrics}
	(this is closely related to
	the so-called \emph{Yamabe problem}, see Section \ref{prelim}).
	
	Hypotheses on the sign of the Yamabe invariant may lead
	to surprising conclusions, especially in the four-dimensional case:
	for instance, a massive contribution
	was given by Gursky, who proved a sharp topological lower bound
	for the self-dual part of the Weyl functional, assuming
	the non-negativity of $Y(M,[g])$ and the existence of a positive
	eigenvalue for the intersection form of $M$ (\cite{gurskyann}).
	Later, this result was extended by the same author to half
	harmonic Weyl manifolds with positive Yamabe invariant
	(\cite{gurskyharmweyl}); moreover, strong rigidity results
	for four-manifolds with positive Yamabe invariant were proven in
	\cite{changguryang}, assuming additional curvature bounds.
	The same inequality obtained by Gursky
	was proven by LeBrun for conformal classes
	of symplectic type on a Del Pezzo surface, removing the hypothesis
	on the sign of $Y(M,[g])$ (\cite{lebrundelpezzo}).
	We also mention the result obtained by Chang, Gursky and Yang, who
	managed to prove that, given a closed Riemannian
	four-manifold $(M,g)$ with positive
	Yamabe invariant, there always exists a metric $\tilde{g}\in [g]$
	such that the Ricci tensor is strictly positive, provided that
	the integral of $\sigma_2(A)$,
	the second elementary symmetric function of
	the Schouten tensor, is positive (\cite{changgurskyyangann}).
	While, on one hand, all these results hold on closed four-manifolds,
	on the other hand some rigidity theorems can also be proven for
	compact manifolds of dimension four with boundary (see, for instance,
	\cite{catinondiaye}).
	
	In this paper, we are interested in sharp pinching results for
	Einstein closed manifolds of dimension $n\geq 4$
	with positive Yamabe invariant: in
	particular, we are interested in
	conformally invariant curvature
	inequalities of the form
	\[
	Y(M,[g])\leq A(n)\pa{\int_M\abs{\weyl}^{\frac{n}{2}}d\mu_g}^{\frac{2}{n}}.
	\]
	In dimension four, the optimal result was proven by Gursky and LeBrun
	for the self-dual part of the Weyl tensor, with constant $A(4)=\sqrt{6}$
	(\cite{gurskyharmweyl}, \cite{gurskylebrun}, see Remark \ref{dimfour}).
	As far as higher dimensional cases are concerned,
	Hebey and Vaugon proved that, for a Riemannian metric $g$
	on a closed manifold $M$ such that $[g]$ contains an Einstein metric
	or a locally conformally flat metric, either
	the Yamabe invariant, which is
	assumed to be positive, is bounded
	above by the $L^{\frac{n}{2}}$-norm of the tensor
	$Z=\weyl + \mathring{\ricc}$ or $(M,g)$ is isometric to a quotient
	of the standard sphere $\SS^n$ (\cite{hebeyvaugon}); their method
	relies on the classical Bochner-Weitzenb\"{o}ck formula and on the
	Yamabe-Sobolev
	inequality. A similar approach was used before by Singer to prove
	that, if $(M,g)$ is a $n$-dimensional Einstein manifold
	with positive scalar
	curvature, then $(M,g)$ is isometric to a quotient of the standard
	sphere, assuming that the $L^{\frac{n}{2}}$-norm
	of the Weyl curvature satisfies a
	pinching condition (\cite{singernorm}). The result due to Hebey and Vaugon
	was improved by the second and the fourth author,
	exploiting a method based on the Weitzenb\"{o}ck formula for the Weyl tensor
	(\cite{cmbook}). Moreover, we recall that Tran generalized and
	improved the previous bounds on closed manifolds with harmonic Weyl
	curvature (\cite{tran}).
	
	We point out that the aforementioned result are not sharp if $n>4$,
	meaning that the constants $A(n)$ are not the optimal ones.
	In this direction, a remarkable work due to Bour and Carron
	provides many answers about sharp pinching results for $n$-dimensional
	closed Riemannian manifolds with
	positive Yamabe invariant, under topological assumptions;
	the proofs are obtained \emph{via}
	an integral version of the Bochner-Weitzenb\"{o}ck formula on
	differential forms and
	a clever modification of the Yamabe invariant, which we will exploit
	as well throughout this paper
	(\cite{bourcarron}). In order to obtain better inequalities of
	the desired form, we rely on the classical Weitzenb\"{o}ck
	formula for the Weyl tensor, holding on every harmonic Weyl
	manifold, that is
	\begin{equation} \label{bochweitz}
		\dfrac{1}{2}\Delta\abs{\weyl}^2=\abs{\nabla\weyl}^2+\dfrac{2}{n}S\abs{
			\weyl}^2-2Q,
	\end{equation}
	where
	\begin{equation} \label{qdef}
		Q:=2W_{pqrs}W_{ptru}W_{qtsu}+\dfrac{1}{2}W_{pqrs}W_{pqtu}W_{rstu}
	\end{equation}
	(here, $W_{pqrs}$ are the components of the Weyl tensor with
	respect to a local orthonormal coframe).
	For some useful applications, see e.g.
	\cite{catinomastroliabochner,
	catmastrmontpunz, changguryang, derdz, gurskyharmweyl,
	hebeyvaugon, tran, wu}. The first step, which also is
	the main result of the paper, is to obtain a sharp upper bound
	of the Yamabe invariant with respect to a conformally invariant
	functional, involving $\weyl$ and $Q$. Namely, we are able
	to prove the following
 	\begin{theorem} \label{gianny}
		Let $(M,g)$ be a closed (conformally) Einstein manifold of dimension
		$n\geq 4$ with positive Yamabe invariant. Then, either
		$(M,g)$ is locally conformally flat (hence, a quotient
		of the round sphere) or, if $n\neq 5$ and $\weyl\not\equiv 0$,
		\begin{equation} \label{sharpinequality}
			Y(M,[g])\leq n\pa{\int_M
			\abs{Q}^{\frac{n}{2}}\abs{\weyl}^{-n}d\mu_g}^{\frac{2}{n}}.
		\end{equation}
		Moreover, equality holds in \eqref{sharpinequality}
		if and only if $(M,g)$ is locally symmetric.
		If
		$n=5$ and $\weyl\not\equiv 0$, then
		\begin{equation} \label{inequality5}
		Y(M,[g])\pa{1+\mathrm{Vol}_g^{-\frac{2}{5}}(M)\dfrac{\frac{1}{15}\int_M \abs{\weyl}\,d\mu_g}{\pa{\int_M \abs{\weyl}^{\frac{5}{3}}\,d\mu_g}^{\frac{3}{5}}}}\leq\frac{16}{3}
		\pa{\int_M\abs{Q}^{\frac{5}{2}}\abs{\weyl}^{-5}d\mu_g}
		^{\frac{2}{5}}
		\end{equation}
		and equality holds if and only if $(M,g)$ is locally symmetric.
		In particular,
		the following strict inequality holds:
		\begin{equation} \label{inequality5nonsharp}
				Y(M,[g]) < \dfrac{16}{3}\pa{\int_M
				\abs{Q}^{\frac{5}{2}}\abs{\weyl}^{-5}d\mu_g}^{\frac{2}{5}}.
		\end{equation}
	\end{theorem}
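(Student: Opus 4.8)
The plan is to reduce to an Einstein representative and then couple a one-parameter family of test functions in the Yamabe--Sobolev inequality with the Weitzenb\"{o}ck formula \eqref{bochweitz} and a refined Kato inequality, so that the whole problem collapses to the sign of a single quadratic in the test-function exponent.

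First I would reduce. Both $Y(M,[g])$ and $\pa{\int_M\abs{Q}^{\frac n2}\abs{\weyl}^{-n}\,d\mu_g}^{\frac 2n}$ are conformally invariant (the conformal weights of $\abs{\weyl}$ and of the cubic contraction $Q$ in \eqref{qdef} are matched, so that $\abs{Q}^{\frac n2}\abs{\weyl}^{-n}\,d\mu_g$ scales like $\abs{\weyl}^{\frac n2}\,d\mu_g$), hence I may assume $g$ is Einstein. If $\weyl\equiv0$ then $g$ is locally conformally flat and, being Einstein with $Y>0$, has constant positive curvature, so it is a quotient of $\SS^n$: this is the first alternative. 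Assume now $\weyl\not\equiv0$ and set $u=\abs{\weyl}$. Since Einstein metrics have harmonic Weyl tensor, \eqref{bochweitz} holds; moreover $Y>0$ together with the Einstein condition forces the scalar curvature $S>0$.

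For $a>0$ I would test the defining Yamabe--Sobolev inequality of $Y(M,[g])$ with $\varphi=u^a$, obtaining
\[
Y(M,[g])\pa{\int_M u^{\frac{2n}{n-2}a}\,d\mu_g}^{\frac{n-2}{n}}\le\int_M\pa{\tfrac{4(n-1)}{n-2}\abs{\nabla u^a}^2+Su^{2a}}\,d\mu_g .
\]
Independently I multiply \eqref{bochweitz} by $u^{2a-2}$ and integrate by parts, which expresses $S\int_M u^{2a}$ through $\int_M Q\,u^{2a-2}$, $\int_M u^{2a-2}\abs{\nabla\weyl}^2$ and $\int_M u^{2a-2}\abs{\nabla u}^2$. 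Substituting into the Yamabe inequality and using the refined Kato inequality $\abs{\nabla\weyl}^2\ge\tfrac{n+1}{n-1}\abs{\nabla u}^2$ (valid on harmonic-Weyl manifolds) to remove the $\abs{\nabla\weyl}^2$ term, I reach the master inequality
\[
Y(M,[g])\pa{\int_M u^{\frac{2n}{n-2}a}\,d\mu_g}^{\frac{n-2}{n}}\le n\int_M Q\,u^{2a-2}\,d\mu_g+f(a)\int_M u^{2a-2}\abs{\nabla u}^2\,d\mu_g ,
\]
with $f(a)=\tfrac{4(n-1)}{n-2}a^2-na+\tfrac{n(n-3)}{2(n-1)}$. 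The decisive point is that $f$ is minimized at $a^\ast=\tfrac{n(n-2)}{8(n-1)}$, where $f(a^\ast)=-\tfrac{n(n-4)(n-6)}{16(n-1)}$, so $f(a^\ast)\le0$ exactly when $(n-4)(n-6)\ge0$, i.e. for all $n\ge4$ except $n=5$.

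Thus for $n\ne5$ I take $a=a^\ast$ and discard the nonpositive gradient term, leaving $Y(M,[g])\pa{\int_M u^{\frac{2n}{n-2}a^\ast}}^{\frac{n-2}{n}}\le n\int_M Q\,u^{2a^\ast-2}$; H\"{o}lder with conjugate exponents $\tfrac n2$ and $\tfrac{n}{n-2}$ gives $\int_M Q\,u^{2a^\ast-2}\le\pa{\int_M\abs{Q}^{\frac n2}u^{-n}}^{\frac 2n}\pa{\int_M u^{\frac{2n}{n-2}a^\ast}}^{\frac{n-2}{n}}$, and—crucially—the conformal weighting makes the last factor exactly the one on the left, so it cancels and, dividing through (legitimate since $\weyl\not\equiv0$), \eqref{sharpinequality} follows. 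For $n=5$ one has $f(a^\ast)=\tfrac{5}{64}>0$, so the gradient term survives: here I would estimate $\int_M u^{2a^\ast-2}\abs{\nabla u}^2$ from above by a sharp Sobolev/Yamabe argument, which produces the correction factor on the left of \eqref{inequality5} and, after dropping the resulting nonnegative quantity, the strict bound \eqref{inequality5nonsharp}. For equality I would trace equalities back through the chain: equality in the Yamabe inequality forces $u^{a^\ast}$ to be a minimizer, hence—by Obata's theorem, $g$ not being round—constant, so $\nabla u\equiv0$; equality in the refined Kato inequality then gives $\nabla\weyl\equiv0$, equivalently $\nabla\riem\equiv0$ since $g$ is Einstein, i.e. $(M,g)$ is locally symmetric (the converse being immediate, as then $Q=\tfrac Sn u^2\ge0$ is constant). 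I expect the main obstacle to be precisely this residual gradient term: the coupling closes only because the H\"{o}lder step is conformally weighted and hence insensitive to $a$, reducing everything to $\min_a f\le0$; this degenerates exactly at $n=5$, where producing the sharp estimate for $\int_M u^{2a^\ast-2}\abs{\nabla u}^2$ is the delicate part.
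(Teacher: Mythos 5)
For $n\neq 5$ your argument is correct in substance and takes a genuinely different route from the paper's. You plug $\varphi=\abs{\weyl}^{a}$ directly into the classical Yamabe quotient, trade $S\int_M\abs{\weyl}^{2a}$ for $n\int_M Q\abs{\weyl}^{2a-2}$ via the integrated Weitzenb\"ock formula \eqref{bochweitz}, remove $\abs{\nabla\weyl}^2$ by the refined Kato inequality, and minimize the quadratic $f(a)$; your computations check out ($a^\ast=\frac{n(n-2)}{8(n-1)}$, $f(a^\ast)=-\frac{n(n-4)(n-6)}{16(n-1)}$), and the conformal weighting does make the H\"older factor $\pa{\int_M\abs{\weyl}^{\frac{2n}{n-2}a}}^{\frac{n-2}{n}}$ cancel against the left-hand side for \emph{every} $a$. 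The paper works differently: it fixes the exponent $\alpha=\frac{n-3}{2(n-1)}$ so that the gradient terms cancel pointwise in $\Delta f_\eps$, feeds the resulting differential inequality into the operator $\mathscr{L}^{\beta}$, and uses concavity in $\beta$ of the Bour--Carron-type invariant \eqref{yamabebour} together with Gursky's existence of a minimizer $\hat g$ for the modified Yamabe invariant \eqref{modyamabeq}, followed by Obata. Your route avoids both the concavity trick and the modified-Yamabe existence theory for the inequality itself (Obata is still needed for the equality case, as you say), which is a real simplification; reassuringly, both mechanisms degenerate exactly at $n=5$ (your $f(a^\ast)=\frac{5}{64}>0$ is the counterpart of the paper's $\bar\beta=\frac{8(n-3)}{n(n-2)}=\frac{16}{15}>1$). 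One omission you should repair even for $n\neq5$: since $a^\ast<1$ for $4\leq n\leq 9$ and the weights $\abs{\weyl}^{2a^\ast-2}$ are singular on the zero set of $\weyl$, the test function is not admissible as written and the integration by parts needs justification. The paper devotes real effort to this via $f_\eps=\pa{\abs{\weyl}^2+\eps^2}^{\alpha}$ and the error estimate \eqref{epsestimate}; your argument needs the same regularization, which does go through (use $\abs{\nabla(\abs{\weyl}^2+\eps^2)^{1/2}}\leq\abs{\nabla\abs{\weyl}}$ and the pointwise bound $\abs{Q}\leq C\abs{\weyl}^3$ to pass to the limit), so this is a fixable technical gap rather than a flaw.

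The $n=5$ case, however, is a genuine gap. Your plan — bound the surviving positive term $f(a^\ast)\int_M\abs{\weyl}^{2a^\ast-2}\abs{\nabla\abs{\weyl}}^2$ ``by a sharp Sobolev/Yamabe argument'' so as to produce the correction factor in \eqref{inequality5} — names no mechanism and cannot yield that inequality as stated: observe that the right-hand side of \eqref{inequality5} carries the constant $\frac{16}{3}$, \emph{strictly larger} than $n=5$, so the paper does not absorb the gradient term into the left side at fixed coefficient; it buys its absorption by weakening the $Q$-coefficient. Concretely, the paper takes the modified invariant \eqref{modyamabeq} with $t=\frac{16}{3}=\frac{4(n-1)}{n-2}$, for which the $Q$-terms cancel pointwise in $f_\eps\mathfrak{L}f_\eps$, leaving the strictly negative contribution $-\frac{1}{15}Sf_\eps^2$ (see \eqref{estimateLt}); this gives $\ol{Y}(M,[g])\leq-\frac{1}{15}S\int_M\abs{\weyl}\,d\mu_g\big/\pa{\int_M\abs{\weyl}^{\frac53}d\mu_g}^{\frac35}<0$, and the correction factor on the left of \eqref{inequality5} then emerges from this strict negativity through the comparison with the modified-Yamabe minimizer $\hat g$ and the identity $S=\mathrm{Vol}_g(M)^{\frac25}Y(M,[g])$ — machinery entirely absent from your sketch. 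The $n=5$ equality analysis is likewise more delicate than you indicate: before any pointwise bootstrap one must show $\abs{\weyl}$ never vanishes (H\"older equality forces $Q\abs{\weyl}^{-2}$ to be a nonzero constant), and only then can the pointwise estimate be re-run with $\abs{\weyl}^{\frac12}$ to force \eqref{equality5} and conclude local symmetry.
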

	As a consequence (see Section \ref{prelim}),
	we have the following lower bound for the $L^{\frac{n}{2}}$-norm of
	the Weyl curvature, improving the previous results in \cite{hebeyvaugon}
	for $5\leq n\leq 9$ and in \cite{tran} for $n=5,6$:
	\begin{cor} \label{corollary}
		Let $(M,g)$ be a closed (conformally) Einstein manifold of dimension
		$n\geq 4$ with positive Yamabe invariant. Then, either
		$(M,g)$ is locally conformally flat or
		\begin{equation} \label{hebeyimprove}
			Y(M,[g])\leq A(n)\pa{\int_M\abs{\weyl}^{\frac{n}{2}}
			d\mu_g}^{\frac{2}{n}},
		\end{equation}
		where $A(4)=\sqrt{6}$, $A(5)=\frac{64}{3\sqrt{10}}$,
		$A(6)=\sqrt{210}$ and
		$A(n)=\frac{5}{2}n$ for
		$n\geq 7$;
		if $n=5$, \eqref{hebeyimprove} is a strict inequality.
	\end{cor}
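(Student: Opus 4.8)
The plan is to deduce Corollary \ref{corollary} from Theorem \ref{gianny} by means of a single pointwise algebraic estimate that controls the cubic contraction $Q$ of \eqref{qdef} by the Weyl tensor, namely an inequality of the form
\[
	\abs{Q}\leq c(n)\abs{\weyl}^3
\]
holding at every point, for a dimensional constant $c(n)$. Once such a bound is available, the rest is substitution and integration. Indeed, if $\weyl\equiv 0$ then $(M,g)$ is locally conformally flat and we fall in the first alternative; otherwise, since on an Einstein (hence harmonic Weyl) manifold $\abs{\weyl}$ can vanish only on a set of measure zero, the integrals appearing in Theorem \ref{gianny} are well defined, and at the points where $\abs{\weyl}>0$ we may estimate pointwise $\abs{Q}^{n/2}\abs{\weyl}^{-n}\leq c(n)^{n/2}\abs{\weyl}^{n/2}$.

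For $n\neq 5$ I would then plug this into \eqref{sharpinequality}, obtaining
\[
	Y(M,[g])\leq n\pa{\int_M\abs{Q}^{\frac{n}{2}}\abs{\weyl}^{-n}\,d\mu_g}^{\frac{2}{n}}\leq n\,c(n)\pa{\int_M\abs{\weyl}^{\frac{n}{2}}\,d\mu_g}^{\frac{2}{n}},
\]
so that $A(n)=n\,c(n)$. For $n=5$ the argument is identical but starts from the strict inequality \eqref{inequality5nonsharp}, giving $Y(M,[g])<\frac{16}{3}c(5)\pa{\int_M\abs{\weyl}^{\frac{5}{2}}\,d\mu_g}^{\frac{2}{5}}$, hence $A(5)=\frac{16}{3}c(5)$ together with the claimed strictness. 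Matching these expressions with the stated values of $A(n)$ forces $c(4)=\tfrac{\sqrt{6}}{4}$, $c(5)=\tfrac{4}{\sqrt{10}}$, $c(6)=\tfrac{\sqrt{210}}{6}$ and $c(n)=\tfrac{5}{2}$ for $n\geq 7$; these are exactly the constants to be established in the pointwise estimate.

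The genuine content, and the main obstacle, is therefore the sharp algebraic inequality $\abs{Q}\leq c(n)\abs{\weyl}^3$. The natural strategy is to regard the Weyl tensor as a trace-free symmetric endomorphism $\mathcal{W}$ of the bundle of $2$-forms $\Lambda^2$ and to read the two summands of $Q$ as two distinct cubic invariants of $\mathcal{W}$: one is a power-trace of the operator, while the other is an independent contraction not expressible through the spectrum of $\mathcal{W}$ alone. One then maximizes $\abs{Q}/\abs{\weyl}^3$ over all algebraic curvature tensors with the Weyl symmetries in dimension $n$, a constrained optimization amenable to Lagrange multipliers or to reduction to the eigenvalues of $\mathcal{W}$. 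For $n\geq 7$ the crude universal bound $c(n)=\tfrac{5}{2}$ suffices and follows by estimating each contraction separately; the delicate regime is the low-dimensional one, where the small number of independent Weyl components permits the sharp constants $c(5)$ and $c(6)$. This is precisely the algebraic problem flagged in the abstract, and pinning down the extremal tensors — which, by the equality case of Theorem \ref{gianny}, must correspond to locally symmetric models — is where the analysis is most demanding.
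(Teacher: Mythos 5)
Your proposal is correct and follows essentially the same route as the paper: the paper's entire proof of the corollary consists of substituting the pointwise algebraic bound \eqref{qestimate} into \eqref{sharpinequality} (and, for $n=5$, into the strict inequality \eqref{inequality5nonsharp}), and your back-solved constants coincide with the paper's, since $\frac{\sqrt{210}}{6}=\frac{\sqrt{70}}{2\sqrt{3}}$. The only difference is that what you flag as the main obstacle --- proving $\abs{Q}\leq C(n)\abs{\weyl}^3$ with these sharp constants --- is not re-derived in the paper but quoted from Huisken ($n=4,6$), Tran ($n=5$) and the universal bound $C(n)=\frac{5}{2}$ for $n\geq 7$, as recalled in Section \ref{prelim}.
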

	\begin{rem} \label{dimfour}
		If $n=4$, the result is sharp: in fact, $\cp^2$ endowed with the
		Fubini-Study metric realizes the equality in \eqref{hebeyimprove}.
		We point out that Gursky and LeBrun provided the optimal pinching
		result, exploiting the peculiarities of $4$-dimensional manifolds
		(\cite{gurskyharmweyl}, \cite{gurskylebrun}):
		indeed, in this case the Weyl tensor can
		be regarded as a self-adjoint operator
		\[
		\mathcal{W}:\Lambda^2\longra\Lambda^2,
		\]
		where $\Lambda^2$ is the bundle of $2$-forms on $M$; moreover, in
		dimension four, the Hodge operator $\star$ induces the
		well-known decomposition
		\[
		\Lambda^2=\Lambda_+\oplus\Lambda_-,
		\]
		where $\Lambda_+$ (resp. $\Lambda_-$) is the subbundle of
		self-dual (resp. anti-self-dual) $2$-forms. This
		splitting leads to the decomposition of
		the Weyl operator into a self-dual and an anti-self-dual
		part, namely
		\[
		\mathcal{W}=\mathcal{W}_++\mathcal{W}_-,
		\]
		which, in turn, provides the well-known
		decomposition of the Weyl tensor
		\[
		\weyl=\weyl^++\weyl^-
		\]
		(for a detailed
		description, see for instance \cite{besse}, \cite{salamon}
		and \cite{singer}).
		Then, if $(M,g)$ is a closed Einstein manifold of dimension $4$
		with positive Yamabe invariant and such
		that $\weyl^+\not\equiv 0$, we have
		\begin{equation} \label{gurskypinch}
		Y(M,[g])\leq\sqrt{6}
		\pa{\int_M\abs{\weyl^+}^2d\mu_g}^{\frac{1}{2}},
		\end{equation}
		with equality if and only if $\nabla\weyl^+\equiv 0$; the same
		results holds if we replace $\weyl^+$ with $\weyl^-$. Note that
		both $\cp^2$ with the standard orientation and the Fubini-Study
		metric (which is a \emph{self-dual} manifold, i.e. $\weyl^-\equiv 0$)
		and $\SS^2\times\SS^2$ with the standard product metric
		realize the equality in \eqref{gurskypinch}. By the
		classification of irreducible symmetric spaces
		(\cite{cartan1}, \cite{cartan2}),
		we get that equality in \eqref{gurskypinch} is only realized
		by these manifolds, up to quotients.
	\end{rem}
	\begin{rem}
		If $n=5$, the constant $A(5)$ in \eqref{hebeyimprove} was
		$\frac{80}{3}$ in \cite{hebeyvaugon} and it is the same we obtained
		in \cite{tran}; in our case, the estimate is strict. If
		$n=6$, the constant $A(6)$ in \eqref{hebeyimprove} was
		$25$ in \cite{hebeyvaugon} and $15$ in \cite{tran}; note
		that $\sqrt{210}<15$. If $n\geq 7$, we recover the result
		in \cite{tran}, therefore improving the pinching in \cite{hebeyvaugon}
		for $7\leq n\leq 9$.
	\end{rem}
	The paper is organized as follows: in Section \ref{prelim}, we
	review some well-known facts about Riemannian manifolds and
	the Yamabe problem, recalling the classical definitions and
	some modifications of the Yamabe functional; after having fixed
	the notation,
	we proceed with the proof of the main results in Section \ref{proofthm}.
	Finally, in Section \ref{corebusiness}, we provide some remarks about
	inequality \eqref{qestimate}, exhibiting a lower bound for the optimal
	constant in dimension $6$ using a twistorial example; then, we
	describe a numerical approach simulating the Lagrange multiplier
	argument used to find the sharp constant in dimension $4$.
	
	\section*{Acknowledgements}
	The authors would like to express their gratitude to Dr. Giovanni Bocchi
	(Università degli Studi di Milano) for his precious contributions to the
	creation of the algorithm used in this paper. All authors are members of the
	Gruppo Nazionale per le Strutture Algebriche, Geometriche e loro Applicazioni
	(GNSAGA) of INdAM (Istituto Nazionale di Alta Matematica).
	
	\section{Preliminaries} \label{prelim}
	Let $(M,g)$ be a Riemannian manifold of dimension $n\geq 3$.
	It is well-known that
	the Riemann curvature tensor $\riem$ admits the
	decomposition
	\begin{equation} \label{riemdecomp}
		\riem=\weyl+\dfrac{1}{n-2}\ricc\KN g-
		\dfrac{S}{2(n-1)(n-2)}g\KN g,
	\end{equation}
	where $\weyl$, $\ricc$ and $S$ denote the Weyl curvature tensor,
	the Ricci tensor and the scalar curvature, respectively, and
	$\KN$ is the Kulkarni-Nomizu product (see, for instance,
	\cite{besse}).
	
	With respect to a local
	orthonormal frame,
	\eqref{riemdecomp} reads as
	\begin{equation} \label{riemdeclocal}
		R_{ijkt}=W_{ijkt}+\dfrac{1}{n-2}\pa{R_{ik}\del{jt}
		-R_{it}\del{jk}+R_{jt}\del{ik}-R_{jk}\del{it}}
		-\dfrac{S}{(n-1)(n-2)}\pa{\del{ik}\del{jt}-\del{it}\del{jk}},
	\end{equation}
	where $R_{ij}=R_{ikjk}$ and $S=R_{ii}$; throughout the paper, when
	dealing with local tensorial computations,
	we adopt Einstein's summation convention over
	repeated indices.
	
	When $(M,g)$ is an \emph{Einstein manifold}, i.e. when
	there exists $\lambda\in\RR$ such that $\ricc=\lambda g$,
	\eqref{riemdecomp} becomes
	\begin{equation*}
		\riem=\weyl+\dfrac{S}{2n(n-1)}g\KN g;
	\end{equation*}
	therefore, the curvature of any Einstein manifold of dimension
	$n\geq 4$ is encoded in the Weyl tensor and in the value of
	the scalar curvature $S$ (which is necessarily constant). Taking
	the squared norms of the tensors, we immediately obtain that,
	on any Einstein manifold,
	\begin{equation} \label{squarenormriem}
		\abs{\riem}^2=\abs{\weyl}^2+\dfrac{2S^2}{n(n-1)}.
	\end{equation}
	This equation will be constantly used in the first part of
	Section \ref{corebusiness}; we point out
	that our convention for the squared norm of a $(r,s)$-tensor field $T$
	is
	\[
	\abs{T}^2=T_{i_1...i_s}^{j_1...j_r}T_{i_1...i_s}^{j_1...j_r}.
	\]
	
	For a Riemannian manifold $(M,g)$ of dimension $n\geq 4$, the Weyl
	tensor is the totally trace-free part of $\riem$, while,
	on any $3$-dimensional Riemannian manifold, the Weyl tensor
	identically vanishes. One of the
	main properties of $\weyl$ resides in its behaviour under conformal
	deformations of the metric $g$: we recall that a \emph{conformal
	deformation} of $g$ is a new metric $\tilde{g}$ obtained by
	rescaling $g$ \emph{via} a smooth positive function $f$, i.e.
	\begin{equation} \label{confchange}
		\tilde{g}=f^2g,
	\end{equation}
	and that the \emph{conformal class} of $g$ is defined as
	\[
	[g]=\{\tilde{g}\in \mathcal{M}: \exists f\in C^{\infty}(M),
	f>0, \mbox{ s.t } \tilde{g}=f^2g\},
	\]
	where $\mathcal{M}$ denotes the space of smooth Riemannian metrics on $M$.
	For our purposes, it will be useful to choose $f=u^{2/(n-2)}$ throughout
	the paper, where $u\in C^{\infty}(M)$, $u>0$.
	
	Under the transformation \eqref{confchange}, the $(1,3)$-version
	of $\weyl$ does not change, i.e. the Weyl tensor is
	\emph{conformally invariant}. This important feature leads to
	a well-known characterization of a class of special Riemannian metrics:
	indeed, a Riemannian metric $g$ is \emph{locally conformally
	flat} if, for every $p\in M$, there exist an open neighborhood $U_p$
	of $p$ and a smooth positive function $f$ such that $(U,f^2g)$ is
	a flat submanifold of $M$. If $n\geq 4$, the celebrated
	Weyl-Schouten Theorem (see e.g. \cite{hertrich} or \cite{lee})
	states that this condition is equivalent to the vanishing of $\weyl$
	on $M$.
	In the next section we will need the transformed components of
	the $(0,4)$-version of $\weyl$,
	$\tilde{W}_{ijkt}$, which satisfy
	\begin{equation} \label{weylconf}
		u^{\frac{4}{n-2}}\tilde{W}_{ijkt}=W_{ijkt},
	\end{equation}
	and the expression of the transformed scalar curvature, i.e.
	\begin{equation} \label{uconf}
		u^{\frac{4}{n-2}}S_{\tilde{g}}=
		S_g-\dfrac{4(n-1)}{n-2}\dfrac{\Delta_gu}{u},
	\end{equation}
	where $\Delta_g$ is the Laplace-Beltrami operator of the metric $g$
	(for a full list of transformed curvature quantities, see
	e.g. \cite{cmbook}).
	
	An important class of metrics, generalizing the locally
	conformally flat ones, is represented by \emph{harmonic Weyl metrics},
	i.e. Riemannian metrics whose Weyl tensor satisfies
	\begin{equation*}
		\diver\weyl\equiv 0, \mbox{ on } M,
	\end{equation*}
	where $\diver$ is the divergence operator.
	It is well-known that
	this condition, for $n\geq 4$,
	is equivalent to the vanishing of the so-called
	\emph{Cotton tensor} $\operatorname{C}$ and, of course, it is
	satisfied by every locally conformally flat metric; moreover,
	a straightforward computation shows that all Einstein metrics
	are harmonic Weyl.
	
	As we mentioned in the Introduction, every harmonic Weyl manifold
	satisfies the Weitzenb\"{o}ck formula \eqref{bochweitz}: we highlight
	the fact that, in dimension four, the same formula holds for
	$\weyl^+$ and $\weyl^-$, and, in this case, one can prove that
	\[
	Q^{\pm}:=2W_{pqrs}^{\pm}W_{ptru}^{\pm}W_{qtsu}^{\pm}+
	\dfrac{1}{2}W_{pqrs}^{\pm}W_{pqtu}^{\pm}W_{rstu}^{\pm}=
	36\operatorname{det}_{\Lambda_{\pm}}\mathcal{W}_{\pm},
	\]
	where $\operatorname{det}_{\Lambda_{\pm}}\mathcal{W}_{\pm}$
	is the determinant of the linear operator $\mathcal{W}_{\pm}$
	from $\Lambda_{\pm}$ to itself (see Remark \ref{dimfour}).
	In general, we can find estimates for $Q$ in terms of
	$\abs{\weyl}^3$: namely,
	\begin{equation} \label{qestimate}
		\abs{Q}\leq C(n)\abs{\weyl}^3,
	\end{equation}
	with $C(4)=\frac{\sqrt{6}}{4}$, $C(5)=\frac{4}{\sqrt{10}}$,
	$C(6)=\frac{\sqrt{70}}{2\sqrt{3}}$ and $C(n)=\frac{5}{2}$ for
	$n\geq 7$. We recall that \eqref{qestimate} is an algebraic inequality,
	therefore an analogous estimate holds for every
	\emph{algebraic} Weyl curvature
	tensor $\weyl'$, i.e. a $(0,4)$-tensor which is totally trace
	free and satisfies
	the same symmetries as $\riem$.
	
	The constants in dimension $4$ and $6$ were obtained by
	Huisken (\cite{huisken}), exploiting a Lagrange multiplier argument
	and an idea due to Tachibana (\cite{tachibana}). We point out
	that $C(4)$ is the optimal constant, since equality in
	\eqref{qestimate} for $n=4$ is achieved by quotients
	of $\mathds{S}^4$, $\cp^2$ and $\RR\mathds{P}^4$.
	On the other hand, the constant $C(5)$ was obtained
	by Tran (\cite{tran}).
	Now, we recall again
	the definition of the classical \emph{Yamabe invariant}:
	\begin{equation} \label{yamabeinv}
		Y(M,[g])=\inf_{\tilde{g}\in[g]}\mathfrak{S}(g)=\inf_{\tilde{g}\in[g]}
		\mathrm{Vol}_{\tilde{g}}(M)^{-\frac{n-2}{n}}\int_M S_{\tilde{g}}
		d\mu_{\tilde{g}};
	\end{equation}
	as we briefly mentioned in the Introduction,
	the question of finding a metric $\hat{g}\in [g]$ that attains
	the minimum of \eqref{yamabeinv} is closely related to
	the so-called \emph{Yamabe problem}, i.e., the problem of finding
	a constant scalar curvature metric in any conformal class $[g]$ on
	any closed smooth manifold,
	whose final resolution was given by the joint efforts of
	Yamabe, Trudinger, Aubin and Schoen (for
	a detailed survey concerning the Yamabe problem, see for instance
	\cite{LeeParker}).
	It is well-known that a \emph{Yamabe minimizer}, i.e. a metric
	that attains the minimum of \eqref{yamabeinv}, is a metric with
	constant scalar curvature, whose sign is the same of $Y(M,[g])$.
	A fundamental tool for the understanding and the resolution of
	the Yamabe problem is given by the \emph{conformal Laplacian operator},
	that is
	\begin{equation*}
		\mathcal{L}_g=-\dfrac{4(n-1)}{n-2}\Delta_g+S_g;
	\end{equation*}
	we observe that $\mathcal{L}_gu$ represents the conformal
	change in \eqref{uconf}, with $\tilde{g}=u^{\frac{4}{n-2}}g$.
	In \cite{gurskyharmweyl},
	Gursky introduced a modified version of the conformal Laplacian,
	involving the Weyl tensor:
	\begin{equation} \label{lgursky}
		\mathcal{L}_g^t=-\dfrac{4(n-1)}{n-2}\Delta_g+S_g-t\abs{\weyl_g}_g,
	\end{equation}
	where $t\in\RR$. Starting from \eqref{lgursky}, he also defined a
	modified version of the Yamabe
	invariant, that is
	\begin{equation} \label{gurskymodifiedyamabe}
		\hat{Y}(M,[g])=\inf_{\tilde{g}\in[g]}
		\mathrm{Vol}_{\tilde{g}}(M)^{-\frac{n-2}{n}}\int_M \pa{S_{\tilde{g}}
		-t\abs{\weyl_{\tilde{g}}}_{\tilde{g}}}
		d\mu_{\tilde{g}};
	\end{equation}
	note that $\hat{Y}(M,[g])$ is indeed a conformal invariant,
	since (see \cite{gurskyharmweyl})
	\begin{equation} \label{gurskyconfchange}
	S_{\tilde{g}}-t\abs{\weyl_{\tilde{g}}}_{\tilde{g}}=
	u^{-\frac{n+2}{n-2}}\mathcal{L}_g^tu.
	\end{equation}
	Moreover, when $\hat{Y}(M,[g])\leq 0$,
	the modified Yamabe problem always admits a solution in every
	conformal class, as shown in \cite{gurskyharmweyl}, which means
	that, for every $[g]$, there always exists a metric $\hat{g}\in[g]$
	which attains the minimum of $\eqref{gurskymodifiedyamabe}$.
	We just mention that the remaining case $\hat{Y}(M,[g])>0$
	was studied by Itoh (\cite{itoh}). \\
	For the proof of Theorem \eqref{gianny}, we introduce a slightly
	different version of \eqref{gurskymodifiedyamabe}, depending on $S$, $Q$
	and $\weyl$: namely, we define the following
	\emph{modified Yamabe invariant}:
	\begin{equation} \label{modyamabeq}
		\ol{Y}^t(M,[g])=\inf_{\tilde{g}\in[g]}
		\mathrm{Vol}_{\tilde{g}}(M)^{-\frac{3}{5}}\int_M
		\pa{S_{\tilde{g}}-tQ_{\tilde{g}}\abs{\weyl_{\tilde{g}}}_{\tilde{g}}^{-2}}
		d\mu_{\tilde{g}},
	\end{equation}
	where $t\in\RR$. It can be shown that
	\begin{equation}\label{yamabemodconphi}
		\ol{Y}^t(M,[g])=\inf_{
			\stackrel{u\in C^{\infty}(M)}{u\neq 0}}
		\dfrac{\int_M u \mathfrak{L}_g^tu\,d\mu_g}{
			\pa{\int_Mu^{\frac{10}{3}}d\mu_g}^{\frac{3}{5}}},
	\end{equation}
	where $\mathfrak{L}^t$ is defined as
	\begin{equation} \label{lnostrodef}
	\mathfrak{L}_g^t=-\frac{4(n-1)}{n-2}\Delta_g+S_g-tQ_g\abs{\weyl_g}_g^{-2}.
	\end{equation}
	To show \eqref{yamabemodconphi}, we perform the aforementioned
	conformal change of the metric:
	then, by \eqref{weylconf} and \eqref{uconf}, the quantity
	$S-tQ\abs{\weyl}$ transforms as
	\begin{equation*}
		S_{\tilde{g}}-tQ_{\tilde{g}}\abs{\weyl_{\tilde{g}}}^{-2}=
		u^{-\frac{4}{n-2}}\sq{S-\dfrac{4(n-1)}{n-2}\dfrac{\Delta_gu}{u}-tQ_g\abs{\weyl_g}^{-2}}=u^{-\frac{
				n+2}{n-2}}{\mathfrak{L}}^tu,
	\end{equation*}
	while the conformal change for the volume form is given by
	\begin{equation*}
		\mu_{\tilde{g}}=
		u^{\frac{2n}{n-2}}
		\mu_g.
	\end{equation*}
	It follows that
	\begin{align*}
		\mathrm{Vol}_{\tilde{g}}(M)^{-\frac{n-2}{n}}\int_M
		S_{\tilde{g}}-t\abs{\weyl_{\tilde{g}}}_{\tilde{g}}d\mu_{\tilde{g}}&=
		\frac{\int_Mu^{-\frac{n+2}{n-2}}{\mathfrak{L}_g^t}u\cdot
		u^{\frac{2n}{n-2}}\,d\mu_g}{\pa{\int_Mu^{\frac{2n}{n-2}}d\,\mu_g}^{
		\frac{n-2}{n}}}=\frac{\int_Mu{\mathfrak{L}_g^t}u\,d\mu_g}{\pa{\int_Mu^
		{\frac{2n}{n-2}}\,d\mu_g}^{\frac{n-2}{n}}},
	\end{align*}
	which implies \eqref{yamabemodconphi}.
	
	
	\section{Proof of Theorem \ref{gianny}} \label{proofthm}
	\noindent
	For the sake of simplicity, we will omit to write the
	dependance from $g$, when it is not necessary.
	Assume that $\abs{\weyl}^2\not\equiv 0$ on $M$ and
	let
	\[
	\alpha=\dfrac{n-3}{2(n-1)}.
	\]
	We want to find integral estimates for a suitable
	second order operator applied to $\abs{\weyl}^{2\alpha}$:
	in order to do so, we
	exploit a strategy similar to the
	ones in
	\cite{bourcarron} and \cite{gurskyharmweyl} to deal with
	the points at which $\abs{\weyl}$ vanishes, where smoothness of
	$\abs{\weyl}^{2\alpha}$ fails.
	Let $\eps>0$ and
	\[
	f_\eps:=\pa{\abs{\weyl}^2+\eps^2}^{\alpha}:
	\]
	by a
	straightforward computation and \eqref{bochweitz}, we have that
	\begin{align} \label{laplfeps}
	\Delta f_\eps&=
	\alpha(\alpha-1)f_\eps^{1-\frac{2}{\alpha}}
	\abs{\nabla\abs{\weyl}^2}^2+
	\alpha f_\eps^{1-\frac{1}{\alpha}}\Delta\abs{\weyl}^2=\\
	&=4\alpha(\alpha-1)f_\eps^{1-\frac{2}{\alpha}}
	\abs{\weyl}^2\abs{\nabla\abs{\weyl}}^2+
	\alpha f_\eps^{1-\frac{1}{\alpha}}
	\pa{2\abs{\nabla\weyl}^2+\dfrac{4}{n}S\abs{\weyl}^2
	-4Q}\notag.
	\end{align}
	Now we exploit a refined Kato inequality for Einstein metrics (see
	\cite{bankasnak} for a proof), which reads as
	\begin{equation} \label{refkato}
		\abs{\nabla\abs{\weyl}}\leq\sqrt{\dfrac{n-1}{n+1}}\abs{\nabla\weyl};
	\end{equation}
	then, from \eqref{laplfeps} we deduce
	\[
		\Delta f_\eps\geq
		\alpha\sq{4(\alpha-1)
		\abs{\weyl}^2f_\eps^{1-\frac{2}{\alpha}}
		+\dfrac{2(n+1)}{n-1}f_\eps^{1-\frac{1}{\alpha}}}
		\abs{\nabla\abs{\weyl}}^2+
		4\alpha f_\eps^{1-\frac{1}{\alpha}}\pa{
		\dfrac{S}{n}\abs{\weyl}^2-Q}.
	\]
	Moreover, since,
	by definition, $f_\eps\geq\abs{\weyl}^{2\alpha}$, we get
	\[
	\Delta f_\eps\geq
	2\alpha\pa{2(\alpha-1)+\dfrac{n+1}{n-1}}
	\abs{\weyl}^2\abs{\nabla\abs{\weyl}}^2
	f_\eps^{1-\frac{2}{\alpha}}
	+4\alpha f_\eps^{1-\frac{1}{\alpha}}
	\pa{\dfrac{S}{n}\abs{\weyl}^2-Q},
	\]
	and, by our choice of $\alpha$, we conclude that
	\begin{equation} \label{laplestimate}
		\Delta f_\eps\geq 4\alpha
		\pa{\dfrac{S}{n}\abs{\weyl}^2-Q}f_\eps^{1-\frac{1}{\alpha}}.
	\end{equation}
	Now we adapt an idea due to Bour and Carron, defining the
	following operator:
	\begin{equation} \label{modifiedyamabeoperator}
		\mathscr{L}^\beta=-\dfrac{4(n-1)}{n-2}\Delta+\beta S
		-\beta nQ\abs{\weyl}^{-2}.
	\end{equation}
	By \eqref{laplestimate}, \eqref{modifiedyamabeoperator}
	and the definition of $f_\eps$,
	we obtain the estimate
	\begin{align*}
	f_\eps \mathscr{L}^\beta f_\eps&=
	-\dfrac{4(n-1)}{n-2}f_\eps\Delta f_\eps+\beta Sf_\eps^2
	-\beta nQ\abs{\weyl}^{-2}f_\eps^2\leq\\
	&\leq-\dfrac{8(n-3)}{n-2}\pa{\dfrac{S}{n}-Q\abs{\weyl}^{-2}}
	\abs{\weyl}^2 f_\eps^{2-\frac{1}{\alpha}}+
	\beta Sf_\eps^2
	-\beta nQ\abs{\weyl}^{-2}f_\eps^2=\\
	&=-\dfrac{8(n-3)}{n-2}\pa{\dfrac{S}{n}-Q\abs{\weyl}^{-2}}
	\pa{1-\eps^2f_\eps^{-\frac{1}{\alpha}}} f_\eps^2+
	\beta Sf_\eps^2
	-\beta nQ\abs{\weyl}^{-2}f_\eps^2=\\
	&=\pa{\beta-\dfrac{8(n-3)}{n(n-2)}}Sf_\eps^2
	+\pa{\dfrac{8(n-3)}{n-2}-\beta n}Q\abs{\weyl}^{-2}f_\eps^2+
	\dfrac{8(n-3)}{n-2}\eps^2\pa{\dfrac{S}{n}-Q\abs{\weyl}^{-2}}
	f_\eps^{2-\frac{1}{\alpha}}.
	\end{align*}
	Since, for $n=4$ and $n\geq 6$,
	\[
	\dfrac{8(n-3)}{n(n-2)}\leq 1,
	\]
	we can choose $\ol{\beta}=\frac{8(n-3)}{n(n-2)}$ in order to have
	\begin{equation*}
		f_\eps \mathscr{L}^{\ol{\beta}}
		f_\eps\leq\dfrac{8(n-3)}{n-2}
		\eps^2\pa{\dfrac{S}{n}-Q\abs{\weyl}^{-2}}f_\eps^{2-\frac{1}{\alpha}}.
	\end{equation*}
	Finally, since $2-\frac{1}{\alpha}<0$ for $n\geq 4$ and
	$f_\eps\geq\eps^{2\alpha}$, we get
	\begin{equation} \label{epsestimate}
	\eps^2f_\eps^{2-\frac{1}{\alpha}}\leq
	\eps^2\cdot\eps^{2\alpha\pa{2-\frac{1}{\alpha}}}=
	\eps^{4\alpha}\to 0, \mbox{ as } \eps\to 0.
	\end{equation}
	Now, we adapt a modification of the Yamabe invariant
	(again due to Bour and Carron, see \cite{bourcarron}),
	in the following way:
	\begin{equation} \label{yamabebour}
		\ol{Y}_g(\beta):=
		\inf_{\stackrel{\phi\in C^{\infty}(M)}{\phi\neq 0}}
		\dfrac{\int_M\phi \mathscr{L}^{\beta}\phi\, d\mu_g}{
		\pa{\int_M\phi^{\frac{2n}{n-2}}d\mu_g}^{\frac{n-2}{n}}}=
		\inf_{\stackrel{\phi\in C^{\infty}(M)}{\phi\neq 0}}
		\dfrac{\int_M\pa{\frac{4(n-1)}{n-2}\abs{\nabla\phi}^2
			+\beta S\phi^2-\beta nQ\abs{\weyl}^{-2}\phi^2}d\mu_g}{
			\pa{\int_M\phi^{\frac{2n}{n-2}}d\mu_g}^{\frac{n-2}{n}}},
	\end{equation}
	for all $\beta\geq 0$. Observe that
	$\ol{Y}_g(1)$ is equal to \eqref{modyamabeq} with
	$t=n$ and, since $M$ is closed,
	$\ol{Y}_g(0)=0$: moreover, \eqref{yamabebour} is the infimum
	of affine functions of $\beta$, therefore it is concave and,
	for $\beta\in [0,1]$,
	\[
	(1-\beta)\ol{Y}_g(0)+\beta\ol{Y}_g(1)\leq \ol{Y}_g(\beta),
	\]
	which implies that
	\begin{equation} \label{concave}
		\beta\ol{Y}^n(M,[g])\leq\ol{Y}_g(\beta).
	\end{equation}
	Note that, by \eqref{epsestimate} and the definition of
	$f_\eps$, we get
	\begin{align*}
	\ol{Y}_g(\ol{\beta})&\leq\dfrac{\int_M f_\eps \mathscr{L}^{\ol{\beta}}f_\eps\,
	d\mu_g}{\pa{\int_M f_\eps^{\frac{2n}{n-2}}\,d\mu_g}^{\frac{n-2}{n}
	}}
	\leq
	\dfrac{8(n-3)}{n-2}
	\dfrac{\int_M
	\pa{\frac{S}{n}-Q\abs{\weyl}^{-2}}f_\eps^{2-\frac{1}{\alpha}}\,
		d\mu_g}{\pa{\int_M f_\eps^{\frac{2n}{n-2}}\,d\mu_g}^{\frac{n-2}{n}}}
	\eps^2\leq\\
	&\leq\dfrac{8(n-3)}{n-2}
	\dfrac{\int_M
		\pa{\frac{S}{n}-Q\abs{\weyl}^{-2}}
		\eps^2f_\eps^{2-\frac{1}{\alpha}}\,
		d\mu_g}{\pa{\int_M u^{\frac{2n}{n-2}}\,d\mu_g}^{\frac{n-2}{n}}}
	 \xrightarrow[\eps\to 0]{}0,
	\end{align*}
	which implies that
	\begin{equation*}
		\ol{Y}_g(\ol{\beta})\leq 0.
	\end{equation*}
	Since $n\neq 5$, $\ol{\beta}\in(0,1)$, therefore,
	by \eqref{concave}, we obtain
	\[
	\bar{\beta}\ol{Y}^n(M,[g])\leq \ol{Y}_g(\ol{\beta}),
	\]
	which implies that
	\begin{equation*}
		\ol{Y}^n(M,[g])\leq 0.
	\end{equation*}
	Now, following the same line of reasoning in \cite{gurskyharmweyl}
	(i.e., adapting the argument of \cite[Proposition 4.4]{LeeParker}),
	we know that there exists a unique metric
	$\hat{g}\in [g]$ which attains the minimum
	of \eqref{modyamabeq}, that is
	\[
	\mathrm{Vol}_{\hat{g}}(M)^{-\frac{2}{n}}\ol{Y}^n(M,[g])=
	\hat{S}-n\hat{Q}\abs{\hat{\weyl}}^{-2},
	\]
	where $\hat{S}$, $\hat{Q}$ and $\hat{\weyl}$ are relative to
	the metric $\hat{g}$. Then, since
	$\ol{Y}(M,[g])\leq 0$ and $g$ is an Einstein metric
	(therefore, it attains the minimum of $Y(M,[g])$), since
	$\hat{g}\in[g]$, we have that
	\[
	Y(M,[g])=\mathrm{Vol}_{g}(M)^{-\frac{n-2}{n}}\int_MS\,d\mu_g\leq
	\mathrm{Vol}_{\hat{g}}(M)^{-\frac{n-2}{n}}\int_M\hat{S}\,d\mu_{\hat{g}}
	\leq \mathrm{Vol}_{\hat{g}}(M)^{-\frac{n-2}{n}}\int_M
	n\hat{Q}\abs{\hat{\weyl}}^{-2}d\mu_{\hat{g}}.
	\]
	By H\"{o}lder inequality, we obtain
	\[
	\int_M
	\hat{Q}\abs{\hat{\weyl}}^{-2}d\mu_{\hat{g}}\leq
	\pa{\int_M\abs{\hat{Q}}^{\frac{n}{2}}\abs{\hat{\weyl}}^{-n}
		d\mu_{\hat{g}}}^{\frac{2}{n}}
	\mathrm{Vol}_{\hat{g}}(M)^{\frac{n-2}{n}},
	\]
	which implies that
	\[
	Y(M,[g])\leq n\pa{\int_M\abs{\hat{Q}}^{\frac{n}{2}}\abs{\hat{\weyl}}^{-n}
		d\mu_{\hat{g}}}^{\frac{2}{n}}.
	\]
	Now, by formula \eqref{weylconf},
	we know that the right-hand side of the previous inequality
	is conformally invariant; therefore,
	\[
	Y(M,[g])\leq n\pa{\int_M\abs{Q}^{\frac{n}{2}}\abs{\weyl}^{-n}
		d\mu_g}^{\frac{2}{n}}
	\]
	and inequality \eqref{sharpinequality} is proven.
	
	Now, suppose that the equality in \eqref{sharpinequality} holds: then,
	\[
	\int_M\hat{S}\,d\mu_{\hat{g}}=
	\int_M
	n\abs{\hat{Q}}\abs{\hat{\weyl}}^{-2}d\mu_{\hat{g}},
	\]
	which immediately implies that, by definition,
	$\ol{Y}^n(M,[g])=0$. Moreover,
	since
	\[
	Y(M,[g])=\mathrm{Vol}_{\hat{g}}(M)^{-\frac{n-2}{n}}
	\int_M \hat{S}\, d\mu_{\hat{g}},
	\]
	we observe that $\hat{g}$ attains the minimum $Y(M,[g])$ and,
	therefore,
	$\hat{g}$ is a solution of the Yamabe problem in $[g]$,
	which implies that $\hat{S}$ is constant:
	hence, since $g$ is an Einstein
	metric in $[g]$,
	we can exploit a well-known result due to Obata (\cite{obata})
	in order to conclude that $\hat{g}=g$ and, as a consequence,
	$\hat{S}=S$, $\hat{Q}=Q$ and $\hat{\weyl}=\weyl$.
	This implies that
	\begin{equation} \label{SQ}
	S-nQ\abs{\weyl}^{-2}=0 \Longra
	Q=\dfrac{S}{n}\abs{\weyl}^2;
	\end{equation}
	therefore, integrating \eqref{bochweitz}, we get
	\[
	0=\int_M\pa{\abs{\nabla\weyl}^2+\dfrac{2}{n}S\abs{\weyl}^2-2Q\,} d\mu_g=
	\int_M\abs{\nabla\weyl}^2d\mu_g,
	\]
	which implies that $\abs{\nabla\weyl}^2\equiv 0$ on $M$, i.e.
	$(M,g)$ is locally symmetric. The converse is trivial, since,
	by \eqref{bochweitz}, we immediately obtain \eqref{SQ}.
	
	Now, for $n=5$, we consider again \eqref{modyamabeq}, choosing
	$t=\frac{16}{3}$; also, in order to simplify the notation,
	we write $\ol{Y}(M,[g])=\ol{Y}^{\frac{16}{3}}(M,[g])$
	and $\mathfrak{L}=\mathfrak{L}^{{\frac{16}{3}}}$.
	Exploiting the same technique used for $n\neq 5$ and noting
	that $\alpha=\frac{1}{4}$ in this case,
	we obtain the estimate
	\begin{equation}\label{estimateLt}
		f_\eps{\mathfrak{L}}f_\eps\leq -\dfrac{1}{15}Sf_\eps^2
		+
		\dfrac{16}{3}\eps^2\pa{\dfrac{S}{5}-Q\abs{\weyl}^{-2}}
		f_\eps^{-2}.
	\end{equation}
	Note that, since $f_\eps>\eps^{\frac{1}{2}}$ for $n=5$,
	we can deduce
	\begin{align}\label{eps5}
		\eps^2f_\eps^{-2}\leq \eps\ra 0, \mbox{ as }\eps\ra 0.
	\end{align}
	Hence, by \eqref{estimateLt} and the definition of $f_\eps$, we deduce
	\begin{align}\label{ineqYmod5}
		\ol{Y}(M,[g])&\leq \dfrac{\int_M f_\eps {\mathfrak{L}}f_\eps\,
			d\mu_g}{\pa{\int_M f_\eps^{\frac{10}{3}}\,d\mu_g}^{\frac{3}{5}
		}}
		\leq
		-\dfrac{\frac{1}{15}S\int_M f_\eps^2\,d\mu_g}{\pa{\int_M f_\eps^{\frac{10}{3}}\,d\mu_g}^{\frac{3}{5}}}+
		\dfrac{16}{3}
		\dfrac{\int_M
			\pa{\frac{S}{5}-Q\abs{\weyl}^{-2}}f_\eps^{-2}\,
			d\mu_g}{\pa{\int_M f_\eps^{\frac{10}{3}}\,d\mu_g}^{\frac{3}{5}}}
		\eps^2.
	\end{align}
	Note that, since $(\abs{\weyl}^2+1)^{\frac{1}{4}}$ is integrable on $M$, by the dominated convergence theorem and \eqref{eps5} we obtain
	\begin{align*} 	
	   -\dfrac{\frac{1}{15}S\int_M f_\eps^2\,d\mu_g}{\pa{\int_M f_\eps^{\frac{10}{3}}\,d\mu_g}^{\frac{3}{5}}}+
	   \dfrac{16}{3}
	   \dfrac{\int_M
	   	\pa{\frac{S}{5}-Q\abs{\weyl}^{-2}}f_\eps^{-2}\,
	   	d\mu_g}{\pa{\int_M f_\eps^{\frac{10}{3}}\,d\mu_g}^{\frac{3}{5}}}
	   \eps^2 \xrightarrow[\eps\to 0]{}
	   	-\dfrac{\frac{1}{15}S\int_M \abs{\weyl}\,d\mu_g}{{\pa{\int_M \abs{\weyl}^{\frac{5}{3}}\,d\mu_g}^{\frac{3}{5}}}},
	\end{align*}
	implying
	\[
	 \ol{Y}(M,[g])\leq 	-\dfrac{\frac{1}{15}S\int_M\abs{\weyl}\,d\mu_g}{{\pa{\int_M \abs{\weyl}^{\frac{5}{3}}\,d\mu_g}^{\frac{3}{5}}}}\leq0.
	\]
	Arguing as in the case $n\neq 5$, we have that there exists a unique
	metric $\hat{g}\in [g]$ attaining the minimum of the modified Yamabe
	invariant $\ol{Y}(M,[g])$, i.e.
	\[\mathrm{Vol}_{\hat{g}}(M)^{-\frac{2}{5}}\ol{Y}(M,[g])=
	\hat{S}-\frac{16}{3}\hat{Q}\abs{\hat{\weyl}}^{-2},
	\]
	where $\hat{S}$, $\hat{Q}$ and $\hat{\weyl}$ are relative to
	the metric $\hat{g}$. Moreover, since $g$ attains the minimum of
	$Y(M,[g])$ and $\hat{g}\in[g]$, we have the inequalities
	\begin{align*}
	Y(M,[g])&=\mathrm{Vol}_{g}(M)^{-\frac{3}{5}}\int_MS\,d\mu_g\leq
	\mathrm{Vol}_{\hat{g}}(M)^{-\frac{3}{5}}\int_M\hat{S}\,d\mu_{\hat{g}}
	\leq\\
	&\leq \mathrm{Vol}_{\hat{g}}(M)^{-\frac{3}{5}}\int_M
	\frac{16}{3}\hat{Q}\abs{\hat{\weyl}}^{-2}d\mu_{\hat{g}}-
	\dfrac{\frac{1}{15}S\int_M \abs{\weyl}\,d\mu_g}{\pa{\int_M \abs{\weyl}^{\frac{5}{3}}\,d\mu_g}^{\frac{3}{5}}}.
	\end{align*}
	By H\"{o}lder inequality, we deduce
	\[Y(M,[g])\leq \frac{16}{3}
	\pa{\int_M\abs{\hat{Q}}^{\frac{5}{2}}\abs{\hat{\weyl}}^{-5}d\mu_{\hat{g}}}
	^{\frac{2}{5}}-\dfrac{\frac{1}{15}S\int_M \abs{\weyl}\,d\mu_g}{\pa{\int_M \abs{\weyl}^{\frac{5}{3}}\,d\mu_g}^{\frac{3}{5}}},
	\]
	that is, since $S=\mathrm{Vol}_g(M)^{\frac{2}{5}}Y(M,[g])$,
	\begin{align}\label{YQW5}
		Y(M,[g])\pa{1+\mathrm{Vol}_g(M)^{-\frac{2}{5}}\dfrac{\frac{1}{15}\int_M \abs{\weyl}\,d\mu_g}{\pa{\int_M \abs{\weyl}^{\frac{5}{3}}\,d\mu_g}^{\frac{3}{5}}}}\leq \frac{16}{3}
		\pa{\int_M\abs{\hat{Q}}^{\frac{5}{2}}\abs{\hat{\weyl}}^{-5}d\mu_{\hat{g}}}
		^{\frac{2}{5}}=\frac{16}{3}
		\pa{\int_M\abs{Q}^{\frac{5}{2}}\abs{\weyl}^{-5}d\mu_g}
		^{\frac{2}{5}}.
	\end{align}
	It follows that inequality \eqref{inequality5} holds.
	If equality in \eqref{inequality5} is attained, then
	\begin{align*}
		\mathrm{Vol}_{\hat{g}}(M)^{-\frac{3}{5}}
		\int_M\hat{S}\,d\mu_{\hat{g}}= \mathrm{Vol}_{\hat{g}}(M)^{-\frac{3}{5}}\int_M
		\frac{16}{3}\abs{\hat{Q}}\abs{\hat{\weyl}}^{-2}d\mu_{\hat{g}}-
		\dfrac{\frac{1}{15}S\int_M \abs{\weyl}\,d\mu_g}{\pa{\int_M \abs{\weyl}^{\frac{5}{3}}\,d\mu_g}^{\frac{3}{5}}},
	\end{align*}
	which implies
	\begin{align}\label{eqn5yamabe}
		\ol{Y}(M,[g])=-\dfrac{\frac{1}{15}S\int_M \abs{\weyl}\,d\mu_g}{\pa{\int_M \abs{\weyl}^{\frac{5}{3}}\,d\mu_g}^{\frac{3}{5}}}.
	\end{align}
	Moreover, as for the case $n\neq 5$, $\hat{g}$ is a solution of the Yamabe problem, implying again that $\hat{g}=g$.
	Note that equality holds in H\"older's estimate: therefore,
	$Q\abs{\weyl}^{-2}$ is constant on $M$. Moreover,
	$Q\abs{\weyl}^{-2}\neq 0$ on $M$, otherwise, integrating
	\eqref{bochweitz}, we would conclude that $(M,g)$ is locally
	conformally flat, which contradicts our hypothesis.
	Now, since $\abs{\weyl}\neq 0$ on $M$,
	we can repeat the initial argument of the proof
	replacing $f_\eps$ with $\abs{\weyl}^{\frac{1}{2}}$ in order to
	get
	\begin{equation} \label{lestimate5}
		\mathfrak{L}\abs{\weyl}^{\frac{1}{2}}\leq
		-\dfrac{S}{15}\abs{\weyl}^{\frac{1}{2}};
	\end{equation}
	therefore, by \eqref{lestimate5}, we deduce
	\[
	\ol{Y}(M,[g])\leq\dfrac{\int_M\abs{\weyl}^{\frac{1}{2}}
		\mathfrak{L}\abs{\weyl}^{\frac{1}{2}}d\mu_g}{\pa{\int_M\abs{\weyl}^
		{\frac{5}{3}}}^{\frac{3}{5}}}\leq
		-\dfrac{\frac{1}{15}S\int_M \abs{\weyl}\,d\mu_g}{\pa{\int_M \abs{\weyl}^{\frac{5}{3}}\,d\mu_g}^{\frac{3}{5}}},
	\]
	and, since \eqref{eqn5yamabe} holds,
	we obtain
	\[
	\int_M\abs{\weyl}^{\frac{1}{2}}\mathfrak{L}\abs{\weyl}^{\frac{1}{2}}d\mu_g=
	-\dfrac{S}{15}\int_M\abs{\weyl}\, d\mu_g.
	\]
	By the previous equality and \eqref{lestimate5}, it is easy to observe
	that we must have a pointwise equality, namely
	\begin{equation} \label{equality5}
		\mathfrak{L}\abs{\weyl}^{\frac{1}{2}}=
		-\dfrac{S}{15}\abs{\weyl}^{\frac{1}{2}}.
	\end{equation}
	Using \eqref{lnostrodef} and integrating \eqref{equality5}, we conclude
	\[
	\pa{S-5Q\abs{\weyl}^{-2}}\int_M\abs{\weyl}^{\frac{1}{2}}d\mu_g
	=0;
	\]
	since $\abs{\weyl}\neq 0$ on $M$, the claim follows by \eqref{bochweitz}.
	
	Conversely, if $(M,g)$ is locally symmetric, equality in
	\eqref{inequality5} follows by observing that $\abs{\weyl}$ is
	a constant function and by using
	\eqref{bochweitz} and \eqref{yamabeinv}.
	\qed
	
 \begin{proof}[Proof of Corollary \ref{corollary}]
	Recall that, for $n\geq 4$,
	inequality \eqref{qestimate} holds with the following
	constants
	\[
	C(4)=\dfrac{\sqrt{6}}{4}, \mbox{ } C(5)=\frac{4}{\sqrt{10}}, \mbox{ } C(6)=\frac{\sqrt{70}}{2\sqrt{3}},
	\mbox{ } C(n)=\frac{5}{2} \mbox{ for } n\geq 7.
	\]
	Therefore, using \eqref{qestimate} in \eqref{sharpinequality}
	with these constants, the claim is proven.
\end{proof}
\section{Remarks on the sharp estimate for $Q$} \label{corebusiness}
As we mentioned in Section \ref{prelim}, Huisken (\cite{huisken}) exploited
a standard Lagrange multiplier in order to obtain a sharp constant
in the estimate \eqref{qestimate}: indeed, using the decomposition
described in Remark \ref{dimfour} and pointwise diagonalizing both
$\mathcal{W}_+$ and $\mathcal{W}_-$, the Lagrange multiplier
problem reduces to solving a system of six polynomial equations in the
eigenvalues of $\mathcal{W}_{\pm}$.
As we observed before, Theorem \ref{gianny} and \eqref{qestimate}
in dimension four partially recover the well-known pinching result
due to Gursky and LeBrun (\cite{gurskyharmweyl}, \cite{gurskylebrun}).
As far as higher dimensional cases are concerned, the problem of
finding the optimal constant $A(n)$ in
\begin{equation} \label{yamabepinch}
Y(M,[g])\leq A(n)\pa{\int_M\abs{\weyl}^{\frac{n}{2}}}^{\frac{2}{n}}
\end{equation}
is still open, also due to the fact that Einstein manifolds are far
less understood in these cases than in the four-dimensional one. In view
of Theorem \ref{gianny}, it seems apparent that this problem is
closely related to the existence of a sharp constant in the
estimate \eqref{qestimate}, since, in this case, the optimal pinching
\eqref{yamabepinch} would be a straightforward consequence of
\eqref{sharpinequality}.

In order to improve the investigation on the best constants in
\eqref{qestimate} and \eqref{yamabepinch}, it is natural to check
the most classical examples, such as locally symmetric, irreducible
manifolds $M=G/K$, which also happen to be Einstein.
Recall that, by \eqref{bochweitz}, if
$(M,g)$ is locally symmetric Einstein, then
\begin{equation} \label{qlocsymm}
Q=\dfrac{S}{n}\abs{\weyl}^2;
\end{equation}
therefore, if $C_M\in\RR$ is such that $Q=C_M\abs{\weyl}^3$ on $M$,
then
\[
C_M=\dfrac{S}{n\abs{\weyl}}.
\]
Also, let us denote $A_M\in\RR$ the constant such that
$S=A_M\abs{\weyl}$ on $M$.
By Cartan's classification
of classical Riemannian symmetric spaces
and some curvature results contained
in \cite[Table III]{grayvanhecke}, using \eqref{squarenormriem}
and \eqref{qlocsymm} we are able to describe these
cases as follows:\\[10pt]

\begin{table}[H]
	\caption{}
\begin{center}
	\begin{tabular}{|p{2cm} |p{2.5cm} |p{1.5cm} |p{1.5cm} |p{1.5cm} |p{1.5cm} |p{1.5cm} |}
		\hline
		\multicolumn{7}{|c|}{Classical $5$-dimensional symmetric spaces}\\
		\hline
		$G$ & $K$ & $S$ & $\abs{\weyl}^2$ & $Q$ & $C_M$ & $A_M$\TBstrut\\
		\hline
		$SU(3)$ & $SO(3)$ & $30$ & $210$ & $1260$ & $\frac{\sqrt{210}}{35}$ &
		$\frac{\sqrt{210}}{7}$ \TBstrut\\
		\hline
	\end{tabular}
\label{5classical}
\end{center}
\end{table}
\begin{table}[H]
	\caption{}
\begin{center}
\begin{tabular}{|p{2cm} |p{2.5cm} |p{1.5cm} |p{1.5cm} |p{1.5cm} |p{1.5cm} |p{1.5cm} |}
	\hline
	\multicolumn{7}{|c|}{Classical $6$-dimensional symmetric spaces}\\
	\hline
	$G$ & $K$ & $S$ & $\abs{\weyl}^2$ & $Q$ & $C_M$ & $A_M$\TBstrut\\
	\hline
	$SO(4)$ & $\{I_4\}$ & $24$ & $\frac{288}{5}$ & $\frac{1152}{5}$ &
	$\frac{\sqrt{10}}{6}$ & $\sqrt{10}$\TBstrut\\
	$SO(5)$ & $SO(2)\times SO(3)$ & $18$ & $\frac{312}{5}$ &
	$\frac{936}{5}$ & $\frac{\sqrt{390}}{52}$ &
	$\frac{3\sqrt{390}}{26}$\TBstrut\\
	$U(4)$ & $U(1)\times U(3)$ & $24$ & $\frac{288}{5}$ & $\frac{1152}{5}$ &
	$\frac{\sqrt{10}}{6}$ & $\sqrt{10}$\TBstrut\\
	$SO(6)$ & $U(3)$ & $24$ & $\frac{288}{5}$ & $\frac{1152}{5}$ &
	$\frac{\sqrt{10}}{6}$ & $\sqrt{10}$\TBstrut\\
	$Sp(2)$ & $U(2)$ & $36$ & $\frac{1248}{5}$ & $\frac{7488}{5}$
	& $\frac{\sqrt{390}}{52}$ & $\frac{3\sqrt{390}}{26}$\TBstrut\\
	\hline
\end{tabular}
\label{shu}
\end{center}
\end{table}
\begin{table}[H]
	\caption{}
\begin{center}
	\begin{tabular}{|p{2cm} |p{2.5cm} |p{1.5cm} |p{1.5cm} |p{1.5cm} |p{1.5cm} |p{1.5cm} |}
		\hline
		\multicolumn{7}{|c|}{Classical $8$-dimensional symmetric spaces}\\
		\hline
		$G$ & $K$ & $S$ & $\abs{\weyl}^2$ & $Q$ & $C_M$ & $A_M$\TBstrut\\
		\hline
		$SU(3)$ & $\{I_3\}$ & $96$ & $\frac{6096}{7}$ & $\frac{73152}{7}$ &
		$\frac{\sqrt{2667}}{127}$ & $8\frac{\sqrt{2667}}{127}$\TBstrut\\
		$SO(6)$ & $SO(2)\times SO(4)$ & $32$ & $\frac{864}{7}$ &
		$\frac{3456}{7}$ & $\frac{\sqrt{42}}{18}$ &
		$\frac{4\sqrt{42}}{9}$\TBstrut\\
		$U(4)$ & $U(2)\times U(2)$ & $32$ & $\frac{864}{7}$ & $\frac{3456}{7}$ &
		$\frac{\sqrt{42}}{18}$ & $\frac{4\sqrt{42}}{9}$\TBstrut\\
		$U(5)$ & $U(1)\times U(4)$ & $40$ & $\frac{720}{7}$ & $\frac{3600}{7}$ &
		$\frac{\sqrt{35}}{12}$ & $\frac{2\sqrt{35}}{3}$\TBstrut\\
		$Sp(3)$ & $Sp(1)\times Sp(2)$ & $64$ & $\frac{1888}{7}$ & $\frac{15104}{7}$
		& $\frac{\sqrt{826}}{59}$ & $\frac{8\sqrt{826}}{59}$\TBstrut\\
		\hline
	\end{tabular}
\label{tab8}
\end{center}
\end{table}
\begin{table}[H]
	\caption{}
	\begin{center}
	\begin{tabular}{|p{2cm} |p{2.5cm} |p{1.5cm} |p{1.5cm} |p{1.5cm} |p{1.5cm} |p{1.5cm} |}
	\hline
	\multicolumn{7}{|c|}{Classical $9$-dimensional symmetric spaces}\\
	\hline
	$G$ & $K$ & $S$ & $\abs{\weyl}^2$ & $Q$ & $C_M$ & $A_M$\TBstrut\\
	\hline
	$SO(6)$ & $SO(3)\times SO(3)$ & $36$ & $180$ & $720$ &
	$\frac{2\sqrt{5}}{15}$ & $\frac{6\sqrt{5}}{5}$\TBstrut\\
	$SU(4)$ & $SO(4)$ & $72$ & $720$ & $5760$ &
	$\frac{2\sqrt{5}}{15}$ & $\frac{6\sqrt{5}}{5}$\TBstrut\\
	\hline
	\end{tabular}
\end{center}
\end{table}
\noindent
Note that, in every table, we excluded all the space forms appearing in the
classification of classical symmetric spaces (for instance,
$SO(n+1)/SO(n)\cong\SS^n$). Moreover,
\begin{itemize}
\item
$U(n)/(U(1)\times U(n))$, where $n\in\NN$,
can be regarded as $\cp^{n}$; also, $SO(6)/U(3)$ is
$\cp^3$, while $SO(4)\cong \SS^3\times\rp^3$ has
$SU(2)\times SU(2)\cong \SS^3\times\SS^3$ as its universal cover;
\item there are no $7$-dimensional classical irreducible symmetric spaces
which are not space forms.
\end{itemize}
We can also obtain locally symmetric spaces by taking into account
Cartesian products $M\times N$ of irreducible symmetric Einstein manifolds,
with the product metric $g=g_M+\beta g_N$, where $\beta$ is chosen
in such a way that $g$ is also an Einstein metric, which is unique,
up to rescaling. Exploiting
the computations in \cite{grayvanhecke} again, we derive the following
tables, where manifolds are listed up to quotients:
\begin{table}[H]
	\caption{}
	\begin{center}
		\begin{tabular}{|p{2.5cm} |p{1.5cm} |p{1.5cm} |p{1.5cm} |p{1.5cm} |p{1.5cm} |}
			\hline
			\multicolumn{6}{|c|}{$5$-dimensional symmetric Einstein product spaces}
			\\
			\hline
			Type & $S$ & $\abs{\weyl}^2$ & $Q$ & $C_M$ & $A_M$\TBstrut\\
			\hline
			$\SS^2\times\SS^3$ & $5$ & $\frac{9}{2}$ & $\frac{9}{2}$ &
			$\frac{\sqrt{2}}{3}$ & $\frac{5\sqrt{2}}{3}$
			\TBstrut\\
			\hline
		\end{tabular}
	\label{5product}
	\end{center}
\end{table}

\begin{table}[H]
	\caption{}
	\begin{center}
		\begin{tabular}{|p{2.5cm} |p{1.5cm} |p{1.5cm} |p{1.5cm} |p{1.5cm} |p{1.5cm} |}
			\hline
			\multicolumn{6}{|c|}{$6$-dimensional symmetric Einstein product spaces}
			\\
			\hline
			Type & $S$ & $\abs{\weyl}^2$ & $Q$ & $C_M$ & $A_M$\TBstrut\\
			\hline
			$\SS^2\times\SS^4$ & $24$ & $\frac{1024}{15}$ & $\frac{4096}{15}$ &
			$\frac{\sqrt{15}}{8}$ & $\frac{3\sqrt{15}}{4}$
			\TBstrut\\
				$\SS^2\times\cp^2$ & $6$ & $\frac{104}{15}$ & $\frac{104}{15}$
			& $\frac{\sqrt{390}}{52}$ & $\frac{3\sqrt{390}}{26}$
			\TBstrut\\
			$\SS^2\times\SS^2\times\SS^2$ & $12$ & $\frac{192}{5}$ &
			$\frac{384}{5}$ & $\frac{\sqrt{15}}{12}$ & $\frac{\sqrt{15}}{2}$
			\TBstrut\\
			$\SS^3\times\SS^3$ & $48$ & $\frac{1152}{5}$ & $\frac{9216}{5}$
			& $\frac{\sqrt{10}}{6}$ & $\sqrt{10}$ \TBstrut\\
			\hline
		\end{tabular}
	\end{center}
\end{table}

\begin{table}[H]
	\caption{}
	\begin{center}
		\begin{tabular}{|p{3.5cm} |p{1.5cm} |p{1.5cm} |p{1.5cm} |p{1.5cm} |p{1.5cm} |}
			\hline
			\multicolumn{6}{|c|}{$7$-dimensional symmetric Einstein product spaces}
			\\
			\hline
			Type & $S$ & $\abs{\weyl}^2$ & $Q$ & $C_M$ & $A_M$\TBstrut\\
			\hline
			$\SS^3\times\SS^4$ & $56$ & $\frac{640}{3}$ & $\frac{5120}{3}$ &
			$\sqrt{\frac{3}{10}}$ & $\frac{7\sqrt{30}}{10}$
			\TBstrut\\
			$\SS^3\times\cp^2$ & $14$ & $24$ & $48$
			& $\frac{\sqrt{6}}{6}$ & $\frac{7\sqrt{6}}{6}$
			\TBstrut\\
			$\SS^3\times\SS^2\times\SS^2$ & $14$ & $\frac{104}{3}$ &
			$\frac{208}{3}$ & $\sqrt{\frac{3}{26}}$ & $7\sqrt{\frac{3}{26}}$
			\TBstrut\\
			$\SS^2\times\SS^5$ & $7$ & $\frac{25}{6}$ & $\frac{25}{6}$
			& $\frac{\sqrt{6}}{5}$ & $\frac{7\sqrt{6}}{5}$ \TBstrut\\
			$\SS^2\times(\bigslant{SU(3)}{SO(3)})$ & $14$ & $40$
			& $80$ & $\frac{\sqrt{10}}{10}$ & $\frac{7\sqrt{10}}{10}$
			\TBstrut\\
			\hline
		\end{tabular}
	\end{center}
\end{table}

\begin{table}[H]
	\caption{}
	\begin{center}
		\begin{tabular}{|p{4.5cm} |p{1.5cm} |p{1.5cm} |p{1.5cm} |p{1.5cm} |p{1.5cm} |}
			\hline
			\multicolumn{6}{|c|}{$8$-dimensional symmetric Einstein product spaces}
			\\
			\hline
			Type & $S$ & $\abs{\weyl}^2$ & $Q$ & $C_M$ & $A_M$\TBstrut\\
			\hline
			$\SS^4\times\SS^4$ & $48$ & $\frac{192}{7}$ & $\frac{576}{7}$ &
			$\frac{\sqrt{21}}{8}$ & $\sqrt{21}$
			\TBstrut\\
			$\SS^4\times\cp^2$ & $72$ & $\frac{360}{7}$ & $\frac{1080}{7}$
			& $\frac{\sqrt{70}}{20}$ & $\frac{2\sqrt{70}}{5}$
			\TBstrut\\
			$\SS^4\times\SS^2\times\SS^2$, $\cp^2\times\cp^2$ & $24$ & $\frac{528}{7}$ & $\frac{1584}{7}$
			& $\frac{\sqrt{231}}{44}$ & $\frac{2\sqrt{231}}{11}$
			\TBstrut\\
			$\cp^2\times\SS^2\times\SS^2$ & $24$ & $\frac{696}{7}$ & $\frac{2088}{7}$
			& $\frac{\sqrt{1218}}{116}$ & $\frac{2\sqrt{1218}}{29}$ \TBstrut\\
			$\SS^2\times\SS^2\times\SS^2\times\SS^2$ & $16$ & $\frac{384}{7}$ & $\frac{768}{7}$
			& $\frac{\sqrt{42}}{24}$ & $\frac{\sqrt{42}}{3}$
			\TBstrut\\
			$\SS^3\times\SS^5$ & $16$ & $\frac{90}{7}$ & $\frac{180}{7}$ &
			$\frac{\sqrt{70}}{15}$ & $\frac{8\sqrt{70}}{15}$ \TBstrut\\
			$\SS^2\times\SS^3\times\SS^3$, $\SS^2\times\cp^3$ &
			$8$ & $\frac{54}{7}$ & $\frac{54}{7}$ & $\frac{\sqrt{42}}{18}$
			& $\frac{4\sqrt{42}}{9}$ \TBstrut\\
			$\SS^3\times(\bigslant{SU(3)}{SO(3)})$ & $16$ & $\frac{760}{21}$
			& $\frac{1520}{21}$ & $\sqrt{\frac{21}{190}}$ &
			$4\sqrt{\frac{42}{95}}$ \TBstrut\\
			\hline
		\end{tabular}
	\end{center}
\end{table}

\begin{table}[H]
	\caption{}
\begin{center}
	\begin{tabular}{|p{4.5cm} |p{1.5cm} |p{1.5cm} |p{1.5cm} |p{1.5cm} |p{1.5cm} |}
		\hline
		\multicolumn{6}{|c|}{$9$-dimensional symmetric Einstein product spaces}
		\\
		\hline
		Type & $S$ & $\abs{\weyl}^2$ & $Q$ & $C_M$ & $A_M$\TBstrut\\
		\hline
		$\SS^5\times\SS^4$ & $36$ & $\frac{140}{3}$ & $\frac{560}{3}$ &
		$2\sqrt{\frac{3}{35}}$ & $18\sqrt{\frac{3}{35}}$
		\TBstrut\\
		$\SS^5\times\cp^2$ & $36$ & $\frac{268}{3}$ & $\frac{1072}{3}$
		& $2\sqrt{\frac{3}{67}}$ & $18\sqrt{\frac{3}{67}}$
		\TBstrut\\
		$\SS^5\times\SS^2\times\SS^2$ & $36$ & $132$ &
		$628$ & $\frac{2\sqrt{33}}{33}$ & $\frac{6\sqrt{33}}{11}$
		\TBstrut\\
		$(\SS^2\times\SS^3)\times(\SS^2\times\SS^2)$ & $9$ & $\frac{51}{4}$ & $\frac{51}{4}$
		& $\frac{2\sqrt{51}}{51}$ & $\frac{6\sqrt{51}}{17}$ \TBstrut\\
		$\SS^2\times\SS^3\times\SS^4$ & $9$ & $\frac{89}{12}$
		& $\frac{89}{12}$ & $2\sqrt{\frac{3}{89}}$ & $18\sqrt{\frac{3}{89}}$
		\TBstrut\\
		$\SS^2\times\SS^3\times\cp^2$ & $9$ & $\frac{121}{12}$
		& $\frac{121}{12}$ & $\frac{2\sqrt{3}}{11}$ & $\frac{18\sqrt{3}}{11}$
		\TBstrut\\
		$(\bigslant{SU(3)}{SO(3)})\times\SS^2\times\SS^2$ & $54$ & $507$
		& $3042$ & $\frac{2\sqrt{3}}{13}$ & $\frac{18\sqrt{3}}{13}$
		\TBstrut\\
		$(\bigslant{SU(3)}{SO(3)})\times\SS^4$ & $54$ & $315$
		& $1890$ & $\frac{2\sqrt{35}}{35}$ & $\frac{18\sqrt{35}}{35}$
		\TBstrut\\
		$(\bigslant{SU(3)}{SO(3)})\times\cp^2$ & $54$ & $411$
		& $2466$ & $\frac{2\sqrt{411}}{137}$ & $\frac{18\sqrt{411}}{137}$
		\TBstrut\\
		$\SS^3\times\SS^3\times\SS^3$, $\SS^3\times\cp^3$ &
		$72$ & $432$ & $3456$ & $\dfrac{2\sqrt{3}}{9}$ & $2\sqrt{3}$
		\TBstrut\\
		\hline
	\end{tabular}
\end{center}
\end{table}
Note that, comparing the Tables of irreducible spaces and product
manifolds, we get that the maximal constants $C_M$ and $A_M$
for $5\leq n\leq 9$
are given by
\begin{itemize}
	\item $\SS^{\frac{n-1}{2}}\times\SS^{\frac{n-1}{2}+1}$ with the standard
	product metric for $n=5,7,9$;
	\item $\cp^3$ with the Fubini-Study metric and $\SS^3\times\SS^3$
	with the standard product metric for $n=6$;
	\item $\SS^4\times\SS^4$ with the standard product metric for $n=8$.
\end{itemize}
\subsection{A lower bound in the six-dimensional case}
In this section, we provide a lower bound for the optimal constant
$C(6)$ which realizes \eqref{qestimate}. Recall that, in dimension $4$,
the equality in \eqref{qestimate} is achieved by $\cp^2$, endowed
with the Fubini-Study metric $g_{FS}$. However, this does not hold in higher
dimensional cases in general: in fact, we prove that, if $n=6$, the equality
in \eqref{qestimate} cannot realized by a symmetric space.

We recall that, if $(M,g)$ is an oriented four-dimensional Riemannian
manifold, the \emph{twistor space} $Z$ associated to $(M,g)$
(\cite{athisin}) can be defined as
the set of all pairs $(p,J)$, where $p\in M$ and $J$ is an orthogonal
complex structure on the tangent space $T_pM$: alternatively,
one can consider the representation of the group $U(2)$ in $SO(4)$
and define $Z$ as the $(SO(4)/U(2))$-bundle
\[
Z=O(M)_-\times_{SO(4)}\bigslant{SO(4)}{U(2)},
\]
where $O(M)_-$ is the negatively oriented orthonormal frame bundle of $M$.
More clearly, since $O(M)_-\longra M$ is a principal $SO(4)$-bundle,
$Z$ can be regarded as the associated fiber bundle: therefore,
standard theory of principal bundles (\cite{kobnom}) allows us to define the twistor
space as
\[
Z=\bigslant{O(M)_-}{U(2)}
\]
and, therefore, as the sphere bundle of $2$-forms in $\Lambda_-$ of norm
$\sqrt{2}$
(for a more complete dissertation about this construction of twistor
spaces, see, for instance, \cite{debnan}, \cite{jensenrigoli}
and \cite{salamon}.
We also refer the reader to the useful surveys \cite{davmus} and
\cite{lebruntwist} and the references therein). There exists a
$1$-parameter family of Riemannian metrics $g_t$ on $Z$, where $t>0$,
defined as the pullback of Riemannian metrics $h_t$ on $O(M)_-$ \emph{via}
the $U(2)$-bundle $\sigma:O(M)_-\longra Z$ so that $\sigma$ is a Riemannian
submersion with totally geodesic fibers. Moreover, $(Z,g_t)$ becomes
an almost Hermitian manifold, since it can be endowed with two
almost complex structures $J_+$ (\cite{athisin}) and $J_-$ (\cite{eells}).

It can be shown that, when $M=\SS^4$, its twistor space $Z$ is in fact
$\cp^3$ (\cite{athisin}). It is well-known that, if we fix $t>0$ and we
consider the rescaling of the round metric $\frac{1}{t^2}g_{\SS^4}$ on
$\SS^4$, $(Z,g_t,J_+)$ is a K\"{a}hler-Einstein manifold (\cite{frikurke}),
where $g_t$ happens to be the Fubini-Study metric,
up to rescaling: this case has already been covered in Table \ref{shu},
since $(Z,g_t,J_+)$ is a compact, irreducible symmetric space. However,
by choosing $\frac{1}{2t^2}g_{\SS^4}$ on $\SS^4$, we obtain another Einstein
metric $g_t$ on $Z=\cp^3$ (\cite{frigrune}), which is the so-called
\emph{squashed metric} $g_{sq}$ (see \cite{besse},
\cite{wangziller} and \cite{ziller}).
In this setting, $(Z,g_t,J_+)$ is a strictly
nearly K\"{a}hler manifold with positive holomorphic bisectional curvature
(\cite{davmusali}) and it also is a $3$-symmetric space (for the
classification of homogeneous nearly K\"{a}hler manifold, see
\cite{butru}); however, it can be shown that $Z$ is not symmetric,
since the only locally symmetric twistor spaces associated to
half conformally flat manifolds are $\cp^3$ with the standard
K\"{a}hler-Einstein structure and $M\times\SS^2$, if
$M$ is flat (\cite{cdmtwist}).

An explicit expression for the non-zero components of the Weyl tensor of
$(Z,g_t)=(\cp^3,g_{sq})$,
viewed as the twistor space of $(\SS^4,\frac{1}{2t^2}
g_{\SS^4})$, can be obtained by the formulas listed in the appendix B
of \cite{cdmtwist}, with respect to a local orthonormal frame. Choosing
$t=1$, we get:
\begin{align} \label{weylcompsquash}
	W_{1212}&=W_{3434}=\dfrac{1}{4}, \quad W_{1234}=\dfrac{1}{8},\\
	W_{1313}&=W_{4242}=W_{1414}=W_{2323}=\dfrac{1}{16}, \quad
	W_{1342}=W_{1423}=-\dfrac{1}{16}, \notag\\
	W_{a5b5}&=W_{a6b6}=-\dfrac{3}{16}\delta_{ab}, \mbox{ for } a,b=1,...,4,
	\notag\\
	W_{1526}&=W_{3546}=-W_{1625}=-W_{3645}=\dfrac{3}{16},\notag\\
	W_{1256}&=W_{3456}=\dfrac{3}{8}, \quad
	W_{5656}=\dfrac{3}{4}\notag.
\end{align}
First, it is easy to observe that, although $Z$ is not symmetric, the
squared norm of the Weyl tensor $\abs{\weyl}^2$ is a constant function equal
to $\frac{15}{2}$: since the scalar curvature of $Z$ is constant and
equal to $\frac{15}{2}$, by \eqref{qdef} we have
\[
C_M=\dfrac{Q_{sq}}{\abs{\weyl}_{sq}^3}=\sqrt{\dfrac{3}{10}}>\dfrac{\sqrt{10}}{6},
\]
where the right-hand side is the value of $C_M$ for $(\cp^3,g_{FS})$ and
$(\SS^3\times\SS^3,\alpha g_{\SS^3}+\beta g_{\SS^3})$, where
$\alpha,\beta>0$, as it is shown
in Table \ref{shu}.
Hence, if we define
\[
C_{min}=\inf\{C\in\RR: Q'\leq C\abs{\weyl'}^3\, \mbox{ for every
algebraic Weyl curvature tensor } \weyl'\},
\]
we can conclude that
\[
C_{min}\geq\sqrt{\dfrac{3}{10}}.
\]
Although this counterexample shows that irreducible symmetric $6$-spaces do
not achieve the equality in \eqref{qestimate}, the same may be not true
for the constant $A_M$: indeed, we have
\[
A_{sq}=\dfrac{S_{sq}}{\abs{\weyl}_{sq}}=\sqrt{\dfrac{15}{2}}<
\sqrt{10},
\]
where the right-hand side is the value of $A_M$ for $(\cp^3,g_{FS})$ and
$(\SS^3\times\SS^3,\alpha g_{\SS^3}+\beta g_{\SS^3})$. Coupling this
observation with the optimal result obtained by Bour and Carron
\cite[Theorem C]{bourcarron}, we may guess that, given a closed
(conformally) Einstein $6$-manifold, the following integral pinching holds:
\[
Y(M,[g])\leq\sqrt{10}\pa{\int_M\abs{\weyl}^3d\mu_g}^{\frac{1}{3}},
\]
where equality holds if and only if $(M,g)$ is $\cp^3$ with the Fubini-Study
metric or $\SS^3\times\SS^3$ endowed with the product metric
$\alpha g_{\SS^3}+\beta g_{\SS^3}$, up to quotients.
However, we know that, in general, the estimate
\[
\int_M S^3\,d\mu_g\leq10\sqrt{10}\int_M\abs{\weyl}^3d\mu_g
\]
is not true: indeed, if, for instance,
$(M,g)=(\SS^2\times\SS^4, g_{\SS^2}+\beta g_{\SS^4})$,
where
\[
\beta>\dfrac{\sqrt{15}-3\sqrt{10}}{3\sqrt{10}-6\sqrt{15}},
\]
one can immediately observe that the opposite estimate holds (we highlight
the fact that such a manifold is locally symmetric, but not Einstein).

\subsection{A numerical approach for the sharp constant in \eqref{qestimate}}

The classical Lagrange multiplier exploited to find the optimal constant
in \eqref{qestimate} when $n=4$ is rather hard to
extend to higher-dimensional cases, due to the rapidly increasing number of
independent variables in the linear system. Therefore, we decided to
reproduce these computations \emph{via} a numerical method
\footnote{The algorithm is available under request, by sending an e-mail to
any of the authors.}, in order to
obtain a reasonable guess for the value of the constants
$C(n)$.

Our approach is the following: first, we define the Weyl tensor as a
vector $\weyl\in\RR^{n^4}$. In order to do so, we construct
the vector by labeling the components $W_{ijkl}$ as
$\weyl[x]$, where
\begin{equation} \label{weylvector}
x=(i-1)\cdot n^3+(j-1)\cdot n^2+(k-1)\cdot n+(l-1);
\end{equation}
at this point, $i,j,k,l=1,...,n$, without any symmetry condition on
$\weyl$.
Then we construct the linear constraints given
by the well-known symmetries of the Weyl curvature tensors, i.e.,
skew-symmetry with respect to the first two and the last two indices,
the first Bianchi identity and the totally trace-free condition. Hence,
the constraints are encoded in the rows of a matrix $A$ with $n^4$
columns; we recall that, given any algebraic Weyl curvature tensor
$\weyl'$, the
number of independent components of $\weyl'$ is
$m=\frac{n(n+1)(n+2)(n-3)}{12}$, therefore $A\in\ M_{n-m,n}(\RR)$.
After that, we define the function
\begin{align*}
f:\RR^{n^4}&\longra\RR\\
\weyl&\longmapsto2W_{pqrs}W_{ptru}W_{qtsu}+
\dfrac{1}{2}W_{pqrs}W_{pqtu}W_{rstu},
\end{align*}
writing every component of $\weyl$ as in \eqref{weylvector}. After
defining $\abs{\weyl}$ as usual and setting an upper and a lower bound
for the entries of $\weyl[x]$ (i.e., for instance, $\weyl[x]
\in [-1,1]$ for every $x$ defined as in \eqref{weylvector}),
we minimize the function
$-f(\weyl)/\abs{\weyl}^3$, using the Sequential Least Squared Programming
(SLSQP), an iterative method which, starting at a random vector $\weyl_0
\in\RR^{n^4}$, after some iterations gives a numerical estimate of the
maximum point of the function,
also providing an approximation of the maximum. Namely, we are able
to obtain a numerical estimate of the following quantity:
\[
\min_{\weyl\in\RR^{n^4}}-\dfrac{f(\weyl)}{\abs{\weyl}^3},
\]
under the constraints given by $A\cdot \weyl=0$ and $\weyl[x]\in [-1,1]$.
However, due to the heavy computational cost, we could not manage to
perform this Lagrange multiplier argument if $n>6$ for
now;
on the other hand, we verified the correctness of the algorithm,
by recovering the sharp constant $\frac{\sqrt{6}}{4}$ in dimension four.

After many attempts, also starting from many different initial data, the algorithm hints that the sharp constant in dimension $5$ and $6$ might
be the same as in dimension $4$: namely,
\[
C(4)=\dfrac{\sqrt{6}}{4} \mbox{ and } C(5),C(6)\approx \dfrac{\sqrt{6}}{4}.
\]
We also checked the convergence of the algorithm, using standard numerical
analysis arguments, in order to verify the effectiveness of our procedure:
an example is given in Figure \ref{figura} below, where it is apparent that,
starting from different initial random vectors, the error converges to
zero.

\begin{center}
	\begin{minipage}{0.7\linewidth}
		\includegraphics[width=\linewidth]{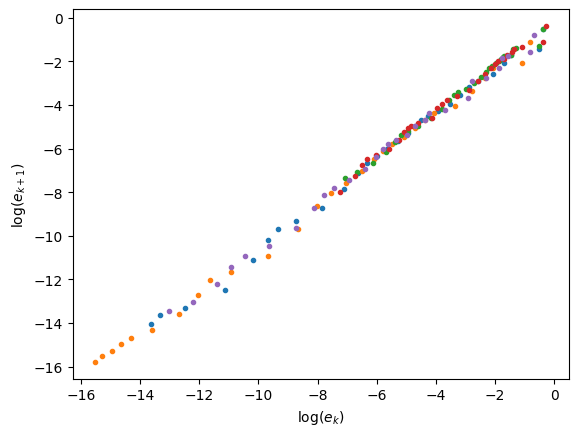}
	\end{minipage}
	\captionof{figure}{Estimates for the order of convergence of
		$\abs{\dfrac{f(\weyl)}{\abs{\weyl}^3}-\dfrac{\sqrt{6}}{4}}$ for $n=5$.
		Here, $\log(e_k)=
		\log\abs{\dfrac{f(\weyl_k)}{\abs{\weyl_k}^3}-\dfrac{\sqrt{6}}{4}}$,
		where $\weyl_k$ is the iteration at the $k$-th step. The scale
		of both axes are logarithmic.
		\label{figura}}
	\hfill
\end{center}
\noindent
This numerical result leads us to conjecture that the estimate
\[
Q\leq C(n)\abs{\weyl}^3
\]
holds with $C(n)=\frac{\sqrt{6}}{4}$ for every $n$:
however, if, on one hand, the equality is achieved when $n=4$ by an algebraic
Weyl tensor which is, in fact, the actual Weyl tensor of a metric $g$ on
a smooth manifold $M$, on the other hand, in higher-dimensional cases
equality in \eqref{qestimate} might be realized by some algebraic
Weyl curvature tensor which does not derive from a Riemannian metric.

\bibliographystyle{abbrv}
\bibliography{6D_biblio}

\begin{thebibliography}{10}

\bibitem{davmusali}
D.~Ali, J.~Davidov, and O.~Mushkarov.
\newblock Twistor spaces with positive holomorphic bisectional curvature.
\newblock {\em C. R. Acad. Bulgare Sci.}, 66(3):339--344, 2013.

\bibitem{athisin}
M.~F. Atiyah, N.~J. Hitchin, and I.~M. Singer.
\newblock Self-duality in four-dimensional {R}iemannian geometry.
\newblock {\em Proc. Roy. Soc. London Ser. A}, 362(1711):425--461, 1978.

\bibitem{bach}
R.~Bach.
\newblock Zur {W}eylschen {R}elativit\"{a}tstheorie und der {W}eylschen
  {E}rweiterung des {K}r\"{u}mmungstensorbegriffs.
\newblock {\em Math. Z.}, 9(1-2):110--135, 1921.

\bibitem{bankasnak}
S.~Bando, A.~Kasue, and H.~Nakajima.
\newblock On a construction of coordinates at infinity on manifolds with fast
  curvature decay and maximal volume growth.
\newblock {\em Invent. Math.}, 97(2):313--349, 1989.

\bibitem{besse}
A.~L. Besse.
\newblock {\em Einstein manifolds}, volume~10 of {\em Ergebnisse der Mathematik
  und ihrer Grenzgebiete (3) [Results in Mathematics and Related Areas (3)]}.
\newblock Springer-Verlag, Berlin, 1987.

\bibitem{bourcarron}
V.~Bour and G.~Carron.
\newblock Optimal integral pinching results.
\newblock {\em Ann. Sci. \'{E}c. Norm. Sup\'{e}r. (4)}, 48(1):41--70, 2015.

\bibitem{butru}
J.-B. Butruille.
\newblock Homogeneous nearly {K}\"{a}hler manifolds.
\newblock In {\em Handbook of pseudo-{R}iemannian geometry and supersymmetry},
  volume~16 of {\em IRMA Lect. Math. Theor. Phys.}, pages 399--423. Eur. Math.
  Soc., Z\"{u}rich, 2010.

\bibitem{cartan1}
E.~Cartan.
\newblock Sur une classe remarquable d'espaces de {R}iemann. {I}.
\newblock {\em Bull. Soc. Math. France}, 54:214--216, 1926.

\bibitem{cartan2}
E.~Cartan.
\newblock Sur une classe remarquable d'espaces de {R}iemann. {II}.
\newblock {\em Bull. Soc. Math. France}, 55:114--134, 1927.

\bibitem{cdmtwist}
G.~Catino, D.~Dameno, and P.~Mastrolia.
\newblock Rigidity results for {R}iemannian twistor spaces under vanishing
  curvature conditions.
\newblock {\em Ann. Global Anal. Geom.}, 63(2):Paper No. 13, 44, 2023.

\bibitem{catinomastroliabochner}
G.~Catino and P.~Mastrolia.
\newblock Bochner-type formulas for the {W}eyl tensor on four-dimensional
  {E}instein manifolds.
\newblock {\em Int. Math. Res. Not. IMRN}, (12):3794--3823, 2020.

\bibitem{cmbook}
G.~Catino and P.~Mastrolia.
\newblock {\em A perspective on canonical {R}iemannian metrics}, volume 336 of
  {\em Progress in Mathematics}.
\newblock Birkh\"{a}user/Springer, Cham, 2020.

\bibitem{catmastrmontpunz}
G.~Catino, P.~Mastrolia, D.~D. Monticelli, and F.~Punzo.
\newblock Four-dimensional closed manifolds admit a weak harmonic {W}eyl
  metric.
\newblock {\em Commun. Contemp. Math.}, 25(8):Paper No. 2250047, 34, 2023.

\bibitem{catinondiaye}
G.~Catino and C.~B. Ndiaye.
\newblock Integral pinching results for manifolds with boundary.
\newblock {\em Ann. Sc. Norm. Super. Pisa Cl. Sci. (5)}, 9(4):785--813, 2010.

\bibitem{changgurskyyangann}
S.-Y.~A. Chang, M.~J. Gursky, and P.~C. Yang.
\newblock An equation of {M}onge-{A}mp\`ere type in conformal geometry, and
  four-manifolds of positive {R}icci curvature.
\newblock {\em Ann. of Math. (2)}, 155(3):709--787, 2002.

\bibitem{changguryang}
S.-Y.~A. Chang, M.~J. Gursky, and P.~C. Yang.
\newblock A conformally invariant sphere theorem in four dimensions.
\newblock {\em Publ. Math. Inst. Hautes \'{E}tudes Sci.}, (98):105--143, 2003.

\bibitem{davmus}
J.~Davidov and O.~Mushkarov.
\newblock Curvature properties of twistor spaces.
\newblock {\em Tr. Mat. Inst. Steklova}, 311:84--105, 2020.
\newblock English version published in Proc. Steklov Inst. Math. {\bf 311}
  (2020), no. 1, 78--97.

\bibitem{debnan}
P.~de~Bartolomeis and A.~Nannicini.
\newblock Introduction to differential geometry of twistor spaces.
\newblock In {\em Geometric theory of singular phenomena in partial
  differential equations ({C}ortona, 1995)}, volume XXXVIII of {\em Sympos.
  Math.}, pages 91--160. Cambridge Univ. Press, Cambridge, 1998.

\bibitem{derdz}
A.~Derdzi\'{n}ski.
\newblock Self-dual {K}\"{a}hler manifolds and {E}instein manifolds of
  dimension four.
\newblock {\em Compositio Math.}, 49(3):405--433, 1983.

\bibitem{eells}
J.~Eells and S.~Salamon.
\newblock Twistorial construction of harmonic maps of surfaces into
  four-manifolds.
\newblock {\em Ann. Scuola Norm. Sup. Pisa Cl. Sci. (4)}, 12(4):589--640, 1985.

\bibitem{frigrune}
T.~Friedrich and R.~Grunewald.
\newblock On {E}instein metrics on the twistor space of a four-dimensional
  {R}iemannian manifold.
\newblock {\em Math. Nachr.}, 123:55--60, 1985.

\bibitem{frikurke}
T.~Friedrich and H.~Kurke.
\newblock Compact four-dimensional self-dual {E}instein manifolds with positive
  scalar curvature.
\newblock {\em Math. Nachr.}, 106:271--299, 1982.

\bibitem{grayvanhecke}
A.~Gray and L.~Vanhecke.
\newblock Riemannian geometry as determined by the volumes of small geodesic
  balls.
\newblock {\em Acta Math.}, 142(3-4):157--198, 1979.

\bibitem{gurskyann}
M.~J. Gursky.
\newblock The {W}eyl functional, de {R}ham cohomology, and
  {K}\"{a}hler-{E}instein metrics.
\newblock {\em Ann. of Math. (2)}, 148(1):315--337, 1998.

\bibitem{gurskyharmweyl}
M.~J. Gursky.
\newblock Four-manifolds with {$\delta W^+=0$} and {E}instein constants of the
  sphere.
\newblock {\em Math. Ann.}, 318(3):417--431, 2000.

\bibitem{gurskylebrun}
M.~J. Gursky and C.~Lebrun.
\newblock On {E}instein manifolds of positive sectional curvature.
\newblock {\em Ann. Global Anal. Geom.}, 17(4):315--328, 1999.

\bibitem{hebeyvaugon}
E.~Hebey and M.~Vaugon.
\newblock Effective {$L_p$} pinching for the concircular curvature.
\newblock {\em J. Geom. Anal.}, 6(4):531--553, 1996.

\bibitem{hertrich}
U.~Hertrich-Jeromin.
\newblock {\em Introduction to {M}\"{o}bius differential geometry}, volume 300
  of {\em London Mathematical Society Lecture Note Series}.
\newblock Cambridge University Press, Cambridge, 2003.

\bibitem{huisken}
G.~Huisken.
\newblock Ricci deformation of the metric on a {R}iemannian manifold.
\newblock {\em J. Differential Geom.}, 21(1):47--62, 1985.

\bibitem{itoh}
M.~Itoh.
\newblock The modified {Y}amabe problem and geometry of modified scalar
  curvatures.
\newblock {\em J. Geom. Anal.}, 15(1):63--81, 2005.

\bibitem{jensenrigoli}
G.~R. Jensen and M.~Rigoli.
\newblock Twistor and {G}auss lifts of surfaces in four-manifolds.
\newblock In {\em Recent developments in geometry ({L}os {A}ngeles, {CA},
  1987)}, volume 101 of {\em Contemp. Math.}, pages 197--232. Amer. Math. Soc.,
  Providence, RI, 1989.

\bibitem{kobnom}
S.~Kobayashi and K.~Nomizu.
\newblock {\em Foundations of differential geometry. {V}ol. {I}}.
\newblock Wiley Classics Library. John Wiley \& Sons, Inc., New York, 1996.
\newblock Reprint of the 1963 original, A Wiley-Interscience Publication.

\bibitem{lebruntwist}
C.~LeBrun.
\newblock Twistors for tourists: a pocket guide for algebraic geometers.
\newblock In {\em Algebraic geometry---{S}anta {C}ruz 1995}, volume 62, Part 2
  of {\em Proc. Sympos. Pure Math.}, pages 361--385. Amer. Math. Soc.,
  Providence, RI, 1997.

\bibitem{lebrundelpezzo}
C.~LeBrun.
\newblock Weyl curvature, {D}el {P}ezzo surfaces, and almost-{K}\"{a}hler
  geometry.
\newblock {\em J. Geom. Anal.}, 25(3):1744--1772, 2015.

\bibitem{lee}
J.~M. Lee.
\newblock {\em Introduction to {R}iemannian manifolds}, volume 176 of {\em
  Graduate Texts in Mathematics}.
\newblock Springer, Cham, second edition, 2018.

\bibitem{LeeParker}
J.~M. Lee and T.~H. Parker.
\newblock The {Y}amabe problem.
\newblock {\em Bull. Amer. Math. Soc. (N.S.)}, 17(1):37--91, 1987.

\bibitem{obata}
M.~Obata.
\newblock The conjectures on conformal transformations of {R}iemannian
  manifolds.
\newblock {\em J. Differential Geometry}, 6:247--258, 1971/72.

\bibitem{salamon}
S.~Salamon.
\newblock Topics in four-dimensional {R}iemannian geometry.
\newblock In {\em Geometry seminar ``{L}uigi {B}ianchi\/'' ({P}isa, 1982)},
  volume 1022 of {\em Lecture Notes in Math.}, pages 33--124. Springer, Berlin,
  1983.

\bibitem{singer}
I.~M. Singer and J.~A. Thorpe.
\newblock The curvature of {$4$}-dimensional {E}instein spaces.
\newblock In {\em Global {A}nalysis ({P}apers in {H}onor of {K}. {K}odaira)},
  pages 355--365. Univ. Tokyo Press, Tokyo, 1969.

\bibitem{singernorm}
M.~A. Singer.
\newblock Positive {E}instein metrics with small {$L^{n/2}$}-norm of the {W}eyl
  tensor.
\newblock {\em Differential Geom. Appl.}, 2(3):269--274, 1992.

\bibitem{tachibana}
S.~Tachibana.
\newblock A theorem on {R}iemannian manifolds of positive curvature operator.
\newblock {\em Proc. Japan Acad.}, 50:301--302, 1974.

\bibitem{tran}
H.~Tran.
\newblock On closed manifolds with harmonic {W}eyl curvature.
\newblock {\em Adv. Math.}, 322:861--891, 2017.

\bibitem{wangziller}
M.~Y. Wang and W.~Ziller.
\newblock On normal homogeneous {E}instein manifolds.
\newblock {\em Ann. Sci. \'{E}cole Norm. Sup. (4)}, 18(4):563--633, 1985.

\bibitem{wu}
P.~Wu.
\newblock A {W}eitzenb\"{o}ck formula for canonical metrics on four-manifolds.
\newblock {\em Trans. Amer. Math. Soc.}, 369(2):1079--1096, 2017.

\bibitem{ziller}
W.~Ziller.
\newblock Homogeneous {E}instein metrics on spheres and projective spaces.
\newblock {\em Math. Ann.}, 259(3):351--358, 1982.

\end{thebibliography}
\end{document}